\newcommand{\vcoords}[2]{($sqrt(0.75)*(#1,0)+0.5*(0,#1)+(0,#2)$)}
\newcommand{\hcoords}[2]{($sqrt(12)*(#1,0)+sqrt(3)*(#2,0)+3*(0,#2)+sqrt(3)*(1,0)+(0,1)$)}
\newcommand{\ptcoords}[4]{($sqrt(12)*(#1,0)+sqrt(3)*(#2,0)+3*(0,#2)+sqrt(3)*(1,0)+(0,1)+sqrt(0.75)*(#3,0)+0.5*(0,#3)+(0,#4)$)}
\newcommand{\markpt}[2]{%
  \filldraw \vcoords{#1}{#2} circle [radius=3pt]
}
\newcommand{\markptp}[4]{%
  \filldraw[fill=red] \ptcoords{#1}{#2}{#3}{#4} circle [radius=2.5pt]
}
\newcommand{\markptq}[4]{%
  \filldraw[fill=blue] \ptcoords{#1}{#2}{#3}{#4} circle [radius=2.5pt]
}
\newcommand{\markpts}[4]{%
  \filldraw[fill=green] \ptcoords{#1}{#2}{#3}{#4} circle [radius=2.5pt]
}
\newcommand{\vctxt}[3]{%
  \draw \vcoords{#1}{#2} node {#3}
}
\newcommand{\hctxt}[3]{%
  \draw \hcoords{#1}{#2} node {#3}
}
\renewcommand{\markpt}[2]{%
  \filldraw \vcoords{#1}{#2} circle [radius=2.5pt]
}
\definecolor{lightergray}{rgb}{0.93, 0.93, 0.93}
\definecolor{mysticupper}{rgb}{0.73, 0.73, 0.58}
\definecolor{mysticlower}{rgb}{0.85, 0.86, 0.78}
\definecolor{deltacolor} {rgb}{0.80, 0.70, 0.60}
\definecolor{sigmacolor} {rgb}{0.80, 0.62, 0.49}
\definecolor{gammacolor} {rgb}{0.65, 0.65, 0.55}
\definecolor{xicolor}    {rgb}{0.60, 0.80, 0.20} 
\definecolor{picolor}    {rgb}{0.80, 0.90, 0.00} 
\definecolor{phicolor}   {rgb}{0.50, 0.85, 1.00}
\definecolor{psicolor}   {rgb}{1.00, 0.70, 0.80} 
\definecolor{thetacolor} {rgb}{1.00, 0.50, 0.60}
\definecolor{lambdacolor}{rgb}{0.60, 0.60, 1.00}
\newcommand{\edgealphaplus}[4]{%
  \draw[shift={\hcoords{#3}{#4}},rotate=#1] [rotate=#2] \vcoords{-2}{0} -- \vcoords{-2}{2}
}
\newcommand{\edgealphaminus}[4]{%
  \draw[shift={\hcoords{#3}{#4}},rotate=#1] [rotate=#2] \vcoords{-2}{0} -- \vcoords{-2}{2}
}
\newcommand{\edgebetaplus}[4]{%
  \draw[shift={\hcoords{#3}{#4}},rotate=#1] [rotate=#2] \vcoords{-2}{0} -- \vcoords{-2}{2}
}
\newcommand{\edgebetaminus}[4]{%
  \draw[shift={\hcoords{#3}{#4}},rotate=#1] [rotate=#2] \vcoords{-2}{0} -- \vcoords{-2}{2}
}
\newcommand{\edgegammaplus}[4]{%
  \draw[shift={\hcoords{#3}{#4}},rotate=#1] [rotate=#2] \vcoords{-2}{0} -- \vcoords{-2}{2}
}
\newcommand{\edgegammaminus}[4]{%
  \draw[shift={\hcoords{#3}{#4}},rotate=#1] [rotate=#2] \vcoords{-2}{0} -- \vcoords{-2}{2}
}
\newcommand{\edgedeltaplus}[4]{%
  \draw[shift={\hcoords{#3}{#4}},rotate=#1] [rotate=#2] \vcoords{-2}{0} -- \vcoords{-2}{2}
}
\newcommand{\edgedeltaminus}[4]{%
  \draw[shift={\hcoords{#3}{#4}},rotate=#1] [rotate=#2] \vcoords{-2}{0} -- \vcoords{-2}{2}
}
\newcommand{\edgeepsilonplus}[4]{%
  \draw[shift={\hcoords{#3}{#4}},rotate=#1] [rotate=#2] \vcoords{-2}{0} -- \vcoords{-2}{2}
}
\newcommand{\edgeepsilonminus}[4]{%
  \draw[shift={\hcoords{#3}{#4}},rotate=#1] [rotate=#2] \vcoords{-2}{0} -- \vcoords{-2}{2}
}
\newcommand{\edgezetaplus}[4]{%
  \draw[shift={\hcoords{#3}{#4}},rotate=#1] [rotate=#2] \vcoords{-2}{0} -- \vcoords{-2}{2}
}
\newcommand{\edgezetaminus}[4]{%
  \draw[shift={\hcoords{#3}{#4}},rotate=#1] [rotate=#2] \vcoords{-2}{0} -- \vcoords{-2}{2}
}
\newcommand{\edgeeta}[4]{%
  \draw[shift={\hcoords{#3}{#4}},rotate=#1] [rotate=#2] \vcoords{-2}{0} -- \vcoords{-2}{2}
}
\newcommand{\edgethetaplus}[4]{%
  \draw[shift={\hcoords{#3}{#4}},rotate=#1] [rotate=#2] \vcoords{-2}{0} -- \vcoords{-2}{2}
}
\newcommand{\edgethetaminus}[4]{%
  \draw[shift={\hcoords{#3}{#4}},rotate=#1] [rotate=#2] \vcoords{-2}{0} -- \vcoords{-2}{2}
}
\newcommand{\hexGamma}[3]{%
\fill[gammacolor,shift={\hcoords{#2}{#3}},rotate=#1] \vcoords{-2}{2} --
    \vcoords{-2}{0} -- \vcoords{0}{-2} -- \vcoords{2}{-2} --
    \vcoords{2}{0} -- \vcoords{0}{2} -- cycle;
  \edgealphaminus{#1}{0}{#2}{#3};
  \edgealphaplus{#1}{60}{#2}{#3};
  \edgegammaminus{#1}{120}{#2}{#3};
  \edgedeltaminus{#1}{180}{#2}{#3};
  \edgebetaplus{#1}{240}{#2}{#3};
  \edgebetaminus{#1}{300}{#2}{#3}
}
\newcommand{\hexDelta}[3]{%
  \fill[deltacolor,shift={\hcoords{#2}{#3}},rotate=#1] \vcoords{-2}{2} --
    \vcoords{-2}{0} -- \vcoords{0}{-2} -- \vcoords{2}{-2} --
    \vcoords{2}{0} -- \vcoords{0}{2} -- cycle;
  \edgegammaplus{#1}{0}{#2}{#3};
  \edgebetaplus{#1}{60}{#2}{#3};
  \edgeepsilonminus{#1}{120}{#2}{#3};
  \edgealphaplus{#1}{180}{#2}{#3};
  \edgegammaminus{#1}{240}{#2}{#3};
  \edgezetaminus{#1}{300}{#2}{#3}
}
\newcommand{\hexTheta}[3]{%
  \fill[thetacolor,shift={\hcoords{#2}{#3}},rotate=#1] \vcoords{-2}{2} --
    \vcoords{-2}{0} -- \vcoords{0}{-2} -- \vcoords{2}{-2} --
    \vcoords{2}{0} -- \vcoords{0}{2} -- cycle;
  \edgegammaplus{#1}{0}{#2}{#3};
  \edgebetaplus{#1}{60}{#2}{#3};
  \edgethetaplus{#1}{120}{#2}{#3};
  \edgebetaplus{#1}{180}{#2}{#3};
  \edgeeta{#1}{240}{#2}{#3};
  \edgebetaminus{#1}{300}{#2}{#3}
}
\newcommand{\hexLambda}[3]{%
  \fill[lambdacolor,shift={\hcoords{#2}{#3}},rotate=#1] \vcoords{-2}{2} --
    \vcoords{-2}{0} -- \vcoords{0}{-2} -- \vcoords{2}{-2} --
    \vcoords{2}{0} -- \vcoords{0}{2} -- cycle;
  \edgegammaplus{#1}{0}{#2}{#3};
  \edgebetaplus{#1}{60}{#2}{#3};
  \edgeepsilonminus{#1}{120}{#2}{#3};
  \edgealphaplus{#1}{180}{#2}{#3};
  \edgethetaminus{#1}{240}{#2}{#3};
  \edgebetaminus{#1}{300}{#2}{#3}
}
\newcommand{\hexXi}[3]{%
  \fill[xicolor,shift={\hcoords{#2}{#3}},rotate=#1] \vcoords{-2}{2} --
    \vcoords{-2}{0} -- \vcoords{0}{-2} -- \vcoords{2}{-2} --
    \vcoords{2}{0} -- \vcoords{0}{2} -- cycle;
  \edgealphaminus{#1}{0}{#2}{#3};
  \edgeepsilonplus{#1}{60}{#2}{#3};
  \edgethetaplus{#1}{120}{#2}{#3};
  \edgebetaplus{#1}{180}{#2}{#3};
  \edgeeta{#1}{240}{#2}{#3};
  \edgebetaminus{#1}{300}{#2}{#3}
}
\newcommand{\hexPi}[3]{%
  \fill[picolor,shift={\hcoords{#2}{#3}},rotate=#1] \vcoords{-2}{2} --
    \vcoords{-2}{0} -- \vcoords{0}{-2} -- \vcoords{2}{-2} --
    \vcoords{2}{0} -- \vcoords{0}{2} -- cycle;
  \edgealphaminus{#1}{0}{#2}{#3};
  \edgeepsilonplus{#1}{60}{#2}{#3};
  \edgeepsilonminus{#1}{120}{#2}{#3};
  \edgealphaplus{#1}{180}{#2}{#3};
  \edgethetaminus{#1}{240}{#2}{#3};
  \edgebetaminus{#1}{300}{#2}{#3}
}
\newcommand{\hexSigma}[3]{%
  \fill[sigmacolor,shift={\hcoords{#2}{#3}},rotate=#1] \vcoords{-2}{2} --
    \vcoords{-2}{0} -- \vcoords{0}{-2} -- \vcoords{2}{-2} --
    \vcoords{2}{0} -- \vcoords{0}{2} -- cycle;
  \edgezetaplus{#1}{0}{#2}{#3};
  \edgebetaplus{#1}{60}{#2}{#3};
  \edgeepsilonminus{#1}{120}{#2}{#3};
  \edgealphaplus{#1}{180}{#2}{#3};
  \edgegammaminus{#1}{240}{#2}{#3};
  \edgedeltaplus{#1}{300}{#2}{#3}
}
\newcommand{\hexPhi}[3]{%
  \fill[phicolor,shift={\hcoords{#2}{#3}},rotate=#1] \vcoords{-2}{2} --
    \vcoords{-2}{0} -- \vcoords{0}{-2} -- \vcoords{2}{-2} --
    \vcoords{2}{0} -- \vcoords{0}{2} -- cycle;
  \edgegammaplus{#1}{0}{#2}{#3};
  \edgebetaplus{#1}{60}{#2}{#3};
  \edgeepsilonminus{#1}{120}{#2}{#3};
  \edgeepsilonplus{#1}{180}{#2}{#3};
  \edgeeta{#1}{240}{#2}{#3};
  \edgebetaminus{#1}{300}{#2}{#3}
}
\newcommand{\hexPsi}[3]{%
  \fill[psicolor,shift={\hcoords{#2}{#3}},rotate=#1] \vcoords{-2}{2} --
    \vcoords{-2}{0} -- \vcoords{0}{-2} -- \vcoords{2}{-2} --
    \vcoords{2}{0} -- \vcoords{0}{2} -- cycle;
  \edgealphaminus{#1}{0}{#2}{#3};
  \edgeepsilonplus{#1}{60}{#2}{#3};
  \edgeepsilonminus{#1}{120}{#2}{#3};
  \edgeepsilonplus{#1}{180}{#2}{#3};
  \edgeeta{#1}{240}{#2}{#3};
  \edgebetaminus{#1}{300}{#2}{#3}
}
\DeclareMathAlphabet{\mymathbb}{U}{bbold}{m}{n}
\newcommand{\RR}{\mathbb{R}}
\newcommand{\ZZ}{\ts\mathbb{Z}}
\newcommand{\QQ}{\mathbb{Q}}
\newcommand{\CC}{\mathbb{C}}
\newcommand{\cK}{\mathcal{K}}
\newcommand{\cO}{\mathcal{O}}
\newcommand{\cL}{\mathcal{L}}
\newcommand{\ts}{\hspace{0.5pt}}
\newcommand{\nts}{\hspace{-0.5pt}}
\newtheorem{theorem}{Theorem}
\newtheorem{coro}[theorem]{Corollary}
\theoremstyle{definition}
\newtheorem{remark}[theorem]{Remark}
\newcommand{\ee}{\ts\mathrm{e}}
\newcommand{\ii}{\ts\mathrm{i}}
\newcommand{\bs}{\boldsymbol}
\newcommand{\GAP}{\Gamma_{_\mathrm{\!\!{AP}}}}
\newcommand{\exend}{\hfill$\Diamond$}
\newcommand{\defeq}{\mathrel{\mathop:}=}
\newcommand{\myfrac}[2]{\frac{\raisebox{-2pt}{$#1$}}
  {\raisebox{0.5pt}{$#2$}}}
\title[On the long-range order of the Spectre tilings]
  {On the long-range order of the Spectre tilings}
\author{Michael Baake}
\address{Fakult\"at f\"ur Mathematik, Universit\"at Bielefeld, \newline
  \indent  Postfach 100131, 33501 Bielefeld, Germany}
\email{$\{$mbaake,gaehler,jmazac$\}$@math.uni-bielefeld.de}
\author{Franz G\"{a}hler}
\author{Jan Maz\'{a}\v{c}}
\author{Lorenzo Sadun}
\address{Department of Mathematics, Univeristy of Texas, \newline
  \indent 2515 Speedway, PMA 8.100 Austin, TX 78712, USA}
\email{sadun@math.utexas.edu}
\begin{document}

\begin{abstract}
  The Spectre is an aperiodic monotile for the Euclidean plane that is
  truly chiral in the sense that it tiles the plane without any need
  for a reflected tile. The topological and dynamical properties of
  the Spectre tilings are very similar to those of the Hat
  tilings. Specifically, the Spectre sits within a complex
  $2$-dimensional family of tilings, most of which involve two
  shapes rather than one. All tilings in the family give topologically
  conjugate dynamics, up to an overall rescaling and rotation.  They
  all have pure point dynamical spectrum with continuous
  eigenfunctions and may be obtained from a $4:2$ dimensional
  cut-and-project scheme with regular windows of Rauzy fractal
  type. The diffraction measure of any Spectre tiling is pure point as
  well. For fixed scale and orientation, varying the shapes is
  MLD equivalent to merely varying the projection direction. These
  properties all follow from the first \v{C}ech cohomology being as
  small as it possibly could be, leaving no room for shape changes
  that alter the dynamics.
\end{abstract}

\keywords{Tiling cohomology, Dynamical spectra, Model sets,
  Deformations, Monotile}
\subjclass{52C20, 37D40, 55N05, 52C23}

\maketitle

\section{Introduction and previous results}\label{sec:intro}

In $2023$, Smith et al \cite{Hat} surprised the world with their
discovery of an aperiodic monotile.  The \emph{Hat} is a non-convex
polykite with $14$ edges (two of which are back to back and look like
a double-length edge) and no reflection symmetry. You can tile the
Euclidean plane with isometric copies of the Hat, but the resulting
tilings cannot have any translational symmetry. All such tilings
require both rotated Hat tiles and rotated versions of the reflected
Hat, often called the \emph{anti-Hat}; see the figures in \cite{Hat}
for an illustration of the tiles and a finite patch of the tiling.

A mere two months after the discovery of the Hat, the same author team
constructed a chiral (reflection-free) analogue, now known as the
\emph{Spectre} \cite{Spectre}, by modifying one of the tiles in the
Hat family. As with the Hat, there are $12$ Spectre tiles up to
translation. The resulting tilings are non-periodic and have
statistical $6$-fold rotational symmetry, meaning that each patch
occurs in all six orientations with equal frequencies.  The $12$ tiles
can be divided into two classes. Instead of being related by
reflection, the two classes are related by rotation by $30$
degrees. That is, one only needs a single tile and rotations of that
tile by multiples of $30$ degrees to tile the plane, with all of the
resulting tilings being non-periodic.

Spectre tilings do not have statistical $12$-fold symmetry. Instead,
one group of $6$ tiles occurs with much greater frequency than the
other.  This means that there are actually two LI classes obtained
from a fixed set of $12$ Spectre tiles, each of which is a $30$ degree
rotation of the other.

In this paper, we study the dynamics and topology of the Spectre
tilings. As with the Hat tilings (compare \cite{BGS}), we will show
that there is a complex $4$-dimensional family of Spectre-like
tilings, all of which have topologically conjugate translational
dynamics, up to linear transformation. Within this $4$-dimensional
family, there is a $2$-dimensional subfamily (including the original
Spectre) that maintains $6$-fold statistical symmetry.  These all have
topologically conjugate dynamics, up to rotation and scale. We only
need to understand the dynamics of one tiling in this family to
understand all of them.

Within the $2$-dimensional family, there is a special tiling that we
call \emph{CASPr} (for Cut-And-Symmetrically-Project). This tiling
admits a geometric substitution (an inflation rule that defines a
self-similar tiling).  Many techniques exist for studying such
self-similar tilings. Using a generalization of Solomyak's
\emph{Overlap Algorithm} \cite{Sol97,AL}, we show that the CASPr
tiling has pure-point dynamical spectrum. This implies that it can be
obtained via a cut-and-project scheme, for which we compute the
lattice and the window. More precisely, we select a Delone set that is
\emph{mutually locally derivable} (MLD) from the CASPr tiling and
admits such a description; see \cite[Sec.~5.2]{TAO} for
background on this local version of topological conjugacy.  All
other $6$-fold symmetric tilings in the Spectre tiling family,
including the original Spectre tiling, are MLD to modified Delone sets
that are obtained from essentially the same cut-and-project
scheme. One uses the same total space and the same window to generate
the different point patterns in $\RR^4$, only with different
projections from $\RR^4$ to $\RR^2$ --- a situation that is once again
analogous to that of the Hat versus the CAP tiling
\cite{BGS}. \smallskip

The structure of the paper is as follows. In
Section~\ref{sec:geometry}, we review the geometry of the Hat and
Spectre tiles and explain how to build a complex $2$-dimensional
family of Spectre-like tilings with $6$-fold statistical symmetry. In
Section~\ref{sec:top}, we then compute the \v Cech cohomology of each
space of Spectre tilings. This calculation shows that all shape
deformations of the Spectre (including those that break rotational
symmetry) are topologically conjugate to the
Spectre up to linear transformation, and that our $2$-dimensional 
family that respects rotational symmetry is conjugate to the 
Spectre up to rotation and scale.

In Section~\ref{sec:selfsim}, we construct the CASPr tiling and
compute its return module. In Section~\ref{sec:embed}, we construct
the cut-and-project scheme that yields the CASPr tiling and show that
it has pure-point spectrum. In Section \ref{sec:deform}, we show that
our entire $2$-dimensional family of
Spectre-like tilings are MLD to Delone sets that can be obtained by
merely rescaling, rotating, and varying the projection direction of
the CASPr tiling.

\section{Geometry of Hat and Spectre tiles}\label{sec:geometry}

As seen from the figures in \cite{Hat}, a tile in the Hat family is a
polygon with $8$ edges of length $a$ and $6$ edges of length $b$.  We
identify $\RR^2$ with $\CC$ with the real axis being \emph{vertical};
this unusual convention will prove useful when we consider the
Spectre. The basic Hat tile has $8$ edges whose displacements are $a$
times powers of $\xi = \ee^{2\pi \ii/6} = (1+ \ii \sqrt{3}\,)/2$ and
$6$ edges whose displacements are $\ii \ts b$ times powers of
$\xi$. We refer to this shape as Tile($a,b$). However, there is no
reason why $a$ and $b$ have to be real. We can consider shapes where
$a$ and $b$ are complex numbers, with the displacements along edges
being $a$ or $\ii \ts b$ times powers of $\xi$. This gives a family of
tile shapes of complex dimension 2 (real dimension 4).  Notable
elements of this family include Tile($0,1$) (the Chevron),
Tile($1,\sqrt{3}$) (the Hat), Tile($1,1$) (the Spectre),
Tile($\sqrt{3},1$) (the Turtle) and Tile($1,0$) (the Comet).  For the
Turtle, an alternate proof of aperiodicity was presented in \cite{AA}.

\begin{figure}
\centerline{\includegraphics[width=0.6\textwidth]{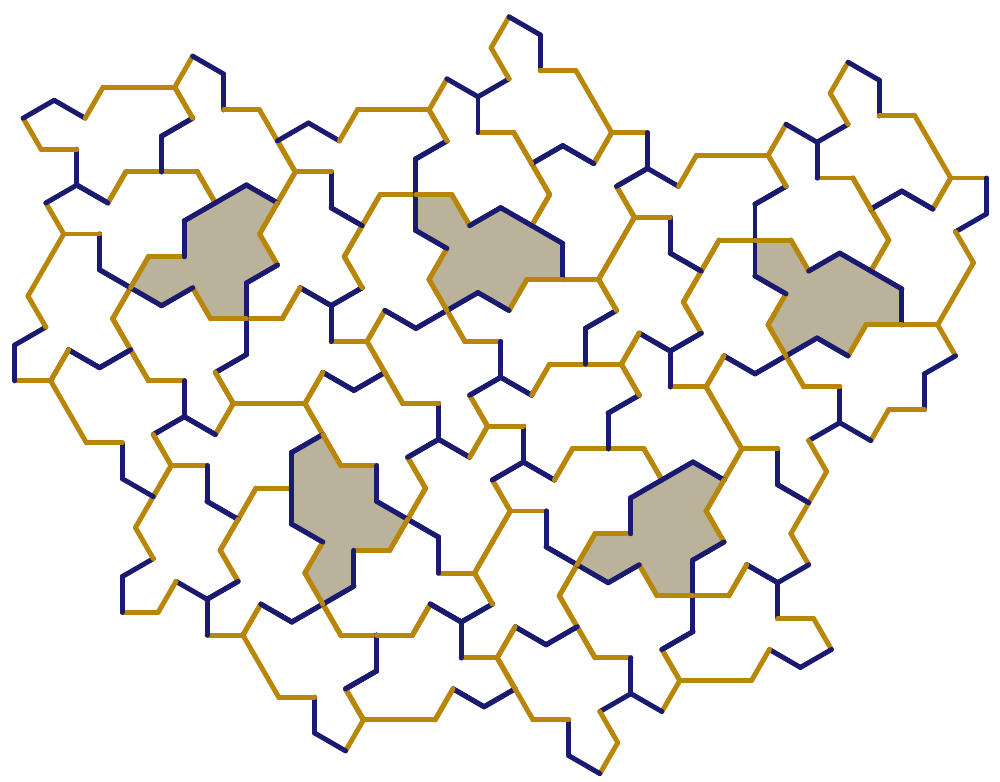}}
\caption{Patch of a Spectre tiling. Up to rotation, there is only
  one tile type. Spectres in minority (even) orientations are shaded.
  Edges of types $a$ and $b$ are distinguished by color.
  \label{fig:spectre1}}
\end{figure}

\begin{figure}
\centerline{\includegraphics[width=0.6\textwidth]{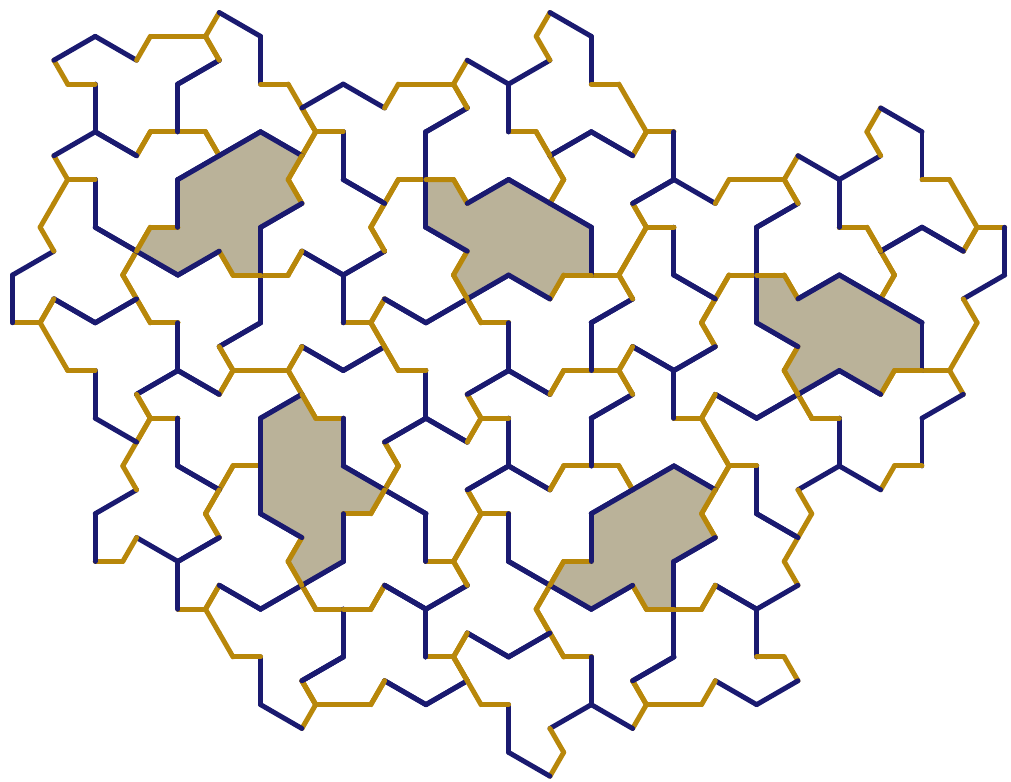}}
\caption{A combinatorially equivalent tiling to Figure~\ref{fig:spectre1},
  in which all even Spectres are replaced by Hats and all odd Spectres
  by Turtles. To achieve this, the length ratio of the two kinds of edges
  is changed from $1$ to $\sqrt{3}$.
  \label{fig:spectre2}}
\end{figure}

A Hat-like tiling is made from Tile($a,b$) in $6$ orientations and the
reflection of Tile($\bar a$, $\bar b$) in $6$ orientations.  (We
define our standard reflection $m$ to be horizontal:
$m(x,y) = (-x,y)$. With our identification of $\RR^2$ with $\CC$, this
is equivalent to complex conjugation.) If $a/b$ is real, all of the
tiles are isometric and we have a monotile. However, even if $a/b$ is
complex (in which case Tile($\bar a$, $\bar b$) is not isometric to
Tile($a,b$)), we still have a well-defined space of tilings.

A Spectre-like tiling is made from Tile($a,b$) rotated by even
multiples of $30$ degrees and Tile($b,a$) rotated by odd multiples of
$30$ degrees. For the ratio $a/b=1$, we have only one tile type, the
Spectre. A patch of such a Spectre tiling is shown in
Figure~\ref{fig:spectre1}. If we vary the ratio $a/b$ to $\sqrt{3}$,
we get two tile types: the even Spectres become Hats and the odd
Spectres become Turtles, as shown in the combinatorially equivalent
patch in Figure~\ref{fig:spectre2}. Indeed, the proof of aperiodicity
in \cite{Spectre} is based on analyzing a tiling by Hats and Turtles
that is combinatorially equivalent to a tiling by even and odd
Spectres. The version with Hats and Turtles has the advantage that the
tiles are located on an underlying hexagonal lattice. For the Spectre
tiling, this is not the case. It is immediate that the Hat-Turtle
tilings (with $a/b=\sqrt{3}$) and the Spectre tilings (with $a/b=1$)
are related by a \emph{shape change} of the tiles, maintaining the
combinatorics of the tilings. With the results of the subsequent
Section~\ref{sec:top}, this implies that the translation actions of
the two tiling dynamical systems are topologically conjugate.

Note that Tile($a,b$) is only isometric to Tile($b,a$) when $a=b$.
While there is a large family of Spectre-like tilings, there is
essentially only one chiral monotile, namely the actual Spectre
Tile($1,1$).  We note in passing that it is possible to make a
periodic tiling from Tile($1,1$) and its reflection.  To eliminate
this possibility, the authors of \cite{Spectre} added edge markings to
Tile($1,1$) that prevent Spectres and anti-Spectres from fitting
together. If one simply defines the prototile set to include tiles but
not anti-tiles, such decorations are unnecessary.

\section{Substitutive structure and cohomology}\label{sec:top}

In the Spectre tilings, as in the Hat tilings, tiles aggregate into
clusters called \emph{meta-tiles}.  These meta-tiles assemble into
larger clusters with the same combinatorial structure as the
meta-tiles, which assemble into still larger structures, and so
on. That is, both the Hat and the Spectre tilings are (combinatorial)
substitution tilings, and so can be studied with a variety of
well-established topological and dynamical tools. There are technical
differences, of course, and the details are more complicated with the
Spectre than with the Hat, but the overall picture is very similar.

The substitutive structure of the Spectre tiling was described in
\cite{Spectre}.  Imagine a Spectre tiling in which even Spectres
outnumber odd Spectres. A combinatorially equivalent Hat-Turtle tiling
would feature isolated Turtles in a sea of Hats. There is one and only
one way for a Turtle to be surrounded by Hats.  Combining the Turtle
with a particular one of its surrounding Hats yields a shape called a 
\emph{Mystic}. The corresponding tiling can then be described as a 
substitution involving two basic units: Mystics, and Hats that are not 
part of Mystics.  

However, there are two major complications. The first is that the
substituted Hats and Mystics are not made up of (rotated) Hats and
Mystics. Instead, they are made up of (rotated and) \emph{reflected}
Hats and Mystics, also known as anti-Hats and anti-Mystics.  Likewise,
a substituted anti-Hat or anti-Mystic is a cluster of ordinary Hats
and Mystics. However, we are interested in tilings that only involve
Hats and Mystics, not anti-Hats or anti-Mystics{\ts}! To go from Hats
and Mystics to Hats and Mystics, we must apply the substitution twice,
getting much larger clusters that are cumbersome to work with.

Put another way, we have a substitution $\sigma$ that takes Hats and
Mystics to anti-Hats and anti-Mystics. The reflection of this rule
gives a substitution $\sigma^*$ that takes anti-Hats and anti-Mystics
to Hats and Mystics. The total substitution on all four kinds of tiles
is given by the matrix
\[
  \begin{pmatrix} 0 & \sigma^* \\ \sigma & 0 \end{pmatrix} ,
\]
which swaps the unreflected and reflected sectors. The square of the
substitution is then
\[
  \begin{pmatrix} \sigma^* \nts \sigma & 0 \\
    0 & \sigma \ts \sigma^* \end{pmatrix},
\]
which preserves sectors. Since we are interested only in unreflected
tiles, we need to study $\sigma^* \sigma$, which we accomplish by
studying $\sigma$ in a suitable way.

The second complication is that the substitution of Hats and Mystics
does not \emph{force the border} in the sense of \cite{kel}.  In their
proof of aperiodicity, Smith et al.~\cite{Spectre} introduced a
substitution involving nine meta-tiles. One is a substituted Mystic,
denoted $\Gamma$. The other eight are distinct collared versions of
the substituted Hat, denoted $\Delta$, $\Theta$, $\Lambda$, $\Xi$,
$\Pi$, $\Sigma$, $\Phi$ and $\Psi$. With this additional structure,
the substitution does force the border, so we can apply the methods of
Anderson and Putnam \cite{AP} to compute the cohomology.

These nine meta-tiles all are combinatorial hexagons that meet edge to
edge.  For the Hat-Turtle tiling, they are shown in
\cite[Fig.~4.1]{Spectre}.  The tiling with the Hats and Turtles
composed to meta-tiles is MLD to the plain Hat-Turtle tiling.  The
meta-tile tiling of combinatorial hexagons is then related by a shape
change to a combinatorially equivalent tiling of \emph{regular}
hexagons, so that, up to MLD conjugacies, the latter is also related
by a shape change to the original Spectre tiling.

There are $8$ kinds of edges of the meta-tiles, labeled $\alpha$,
$\beta$, $\gamma$, $\delta$, $\epsilon$, $\zeta$, $\theta$, and
$\eta$.  We use them in this order for reasons that will become clear
shortly.  There are three kinds of vertices, labeled $p$, $q$ and
$s$.  The geometric and combinatorial information is distilled in
Figure~\ref{fig:combhex}, showing each meta-tile as a regular hexagon
with labeled edges and colored vertices.

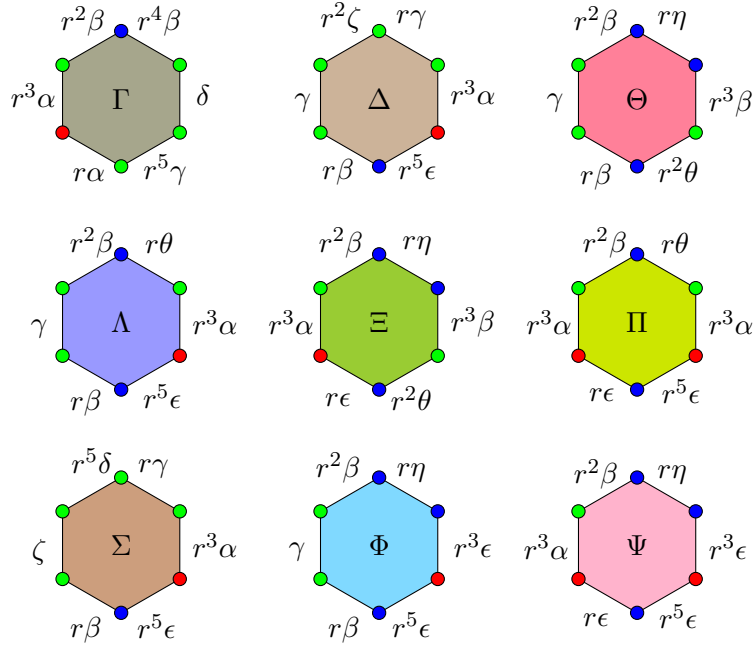
\begin{figure}[ht!]
\begin{center}
\begin{tikzpicture}[x=4.5mm,y=4.5mm]
 \hexGamma{0}{0}{0};
  \hctxt{0}{0}{$\Gamma$};
  \vctxt{-1}{1.6}{$r^3\alpha$};
  \vctxt{0.9}{-1.7}{$r\alpha$};
  \vctxt{3.5}{-2.85}{$r^5\gamma$};
  \vctxt{4.8}{-1.3}{$\delta$};
  \vctxt{3.3}{1.65}{$r^4\beta$};
  \vctxt{0.7}{2.95}{$r^2\beta$};
  \markptp{0}{0}{-2}{0};
  \markptq{0}{0}{0}{2};
  \markpts{0}{0}{2}{-2};
  \markpts{0}{0}{0}{-2};
  \markpts{0}{0}{2}{0};
  \markpts{0}{0}{-2}{2};
 \hexDelta{0}{2.2}{0};
  \hctxt{2.2}{0}{$\Delta$};
  \vctxt{8.2}{-3.22}{$\gamma$};
  \vctxt{9.4}{-5.87}{$r\beta$};
  \vctxt{12.1}{-7.06}{$r^5\epsilon$};
  \vctxt{14.0}{-5.8}{$r^3\alpha$};
  \vctxt{11.9}{-2.6}{$r\gamma$};
  \vctxt{9.5}{-1.35}{$r^2\zeta$};
  \markpts{2.2}{0}{-2}{0};
  \markptp{2.2}{0}{2}{-2};
  \markpts{2.2}{0}{0}{2};
  \markptq{2.2}{0}{0}{-2};
  \markpts{2.2}{0}{2}{0};
  \markpts{2.2}{0}{-2}{2};
 \hexTheta{0}{4.4}{0};
  \hctxt{4.4}{0}{$\Theta$};
  \vctxt{16.8}{-7.52}{$\gamma$};
  \vctxt{18.2}{-10.4}{$r\beta$};
  \vctxt{21.0}{-11.6}{$r^2\theta$};
  \vctxt{22.8}{-10.5}{$r^3\beta$};
  \vctxt{20.8}{-7.1}{$r\eta$};
  \vctxt{18.2}{-5.8}{$r^2\beta$};
  \markpts{4.4}{0}{-2}{0};
  \markpts{4.4}{0}{2}{-2};
  \markptq{4.4}{0}{0}{2};
  \markptq{4.4}{0}{0}{-2};
  \markptq{4.4}{0}{2}{0};
  \markpts{4.4}{0}{-2}{2};
 \hexLambda{0}{1.1}{-2.2};
  \hctxt{1.1}{-2.2}{$\Lambda$};
  \vctxt{-0.8}{-5.4}{$\gamma$};
  \vctxt{0.8}{-8.4}{$r\beta$};
  \vctxt{3.4}{-9.5}{$r^5\epsilon$};
  \vctxt{5.2}{-8.2}{$r^3\alpha$};
  \vctxt{3.3}{-4.9}{$r\theta$};
  \vctxt{1}{-3.75}{$r^2\beta$};
  \markpts{1.1}{-2.2}{-2}{0};
  \markptp{1.1}{-2.2}{2}{-2};
  \markptq{1.1}{-2.2}{0}{2};
  \markptq{1.1}{-2.2}{0}{-2};
  \markpts{1.1}{-2.2}{2}{0};
  \markpts{1.1}{-2.2}{-2}{2};
 \hexXi{0}{3.3}{-2.2};
  \hctxt{3.3}{-2.2}{$\Xi$};
  \vctxt{7.8}{-9.5}{$r^3\alpha$};
  \vctxt{9.4}{-12.6}{$r\epsilon$};
  \vctxt{11.9}{-13.8}{$r^2\theta$};
  \vctxt{14.0}{-12.6}{$r^3\beta$};
  \vctxt{12.1}{-9.4}{$r\eta$};
  \vctxt{9.5}{-8.0}{$r^2\beta$};
  \markptp{3.3}{-2.2}{-2}{0};
  \markpts{3.3}{-2.2}{2}{-2};
  \markptq{3.3}{-2.2}{0}{2};
  \markptq{3.3}{-2.2}{0}{-2};
  \markptq{3.3}{-2.2}{2}{0};
  \markpts{3.3}{-2.2}{-2}{2};
 \hexPi{0}{5.5}{-2.2};
  \hctxt{5.5}{-2.2}{$\Pi$};
  \vctxt{16.6}{-13.9}{$r^3\alpha$};
  \vctxt{18.4}{-16.95}{$r\epsilon$};
  \vctxt{21.1}{-18.2}{$r^5\epsilon$};
  \vctxt{22.8}{-17.0}{$r^3\alpha$};
  \vctxt{20.9}{-13.7}{$r\theta$};
  \vctxt{18.5}{-12.5}{$r^2\beta$};
  \markptp{5.5}{-2.2}{-2}{0};
  \markptp{5.5}{-2.2}{2}{-2};
  \markptq{5.5}{-2.2}{0}{2};
  \markptq{5.5}{-2.2}{0}{-2};
  \markpts{5.5}{-2.2}{2}{0};
  \markpts{5.5}{-2.2}{-2}{2};
 \hexSigma{0}{2.2}{-4.4};
  \hctxt{2.2}{-4.4}{$\Sigma$};
  \vctxt{-0.8}{-12.0}{$\zeta$};
  \vctxt{0.8}{-15.1}{$r\beta$};
  \vctxt{3.2}{-16.2}{$r^5\epsilon$};
  \vctxt{5.2}{-14.8}{$r^3\alpha$};
  \vctxt{3.1}{-11.5}{$r\gamma$};
  \vctxt{1}{-10.2}{$r^5\delta$};
  \markpts{2.2}{-4.4}{-2}{0};
  \markptp{2.2}{-4.4}{2}{-2};
  \markpts{2.2}{-4.4}{0}{2};
  \markptq{2.2}{-4.4}{0}{-2};
  \markpts{2.2}{-4.4}{2}{0};
  \markpts{2.2}{-4.4}{-2}{2};
 \hexPhi{0}{4.4}{-4.4};
  \hctxt{4.4}{-4.4}{$\Phi$};
  \vctxt{8.0}{-16.4}{$\gamma$};
  \vctxt{9.6}{-19.5}{$r\beta$};
  \vctxt{11.9}{-20.5}{$r^5\epsilon$};
  \vctxt{14.0}{-19.2}{$r^3\epsilon$};
  \vctxt{11.9}{-16.0}{$r\eta$};
  \vctxt{9.4}{-14.6}{$r^2\beta$};
  \markpts{4.4}{-4.4}{-2}{0};
  \markptp{4.4}{-4.4}{2}{-2};
  \markptq{4.4}{-4.4}{0}{2};
  \markptq{4.4}{-4.4}{0}{-2};
  \markptq{4.4}{-4.4}{2}{0};
  \markpts{4.4}{-4.4}{-2}{2};
 \hexPsi{0}{6.6}{-4.4};
  \hctxt{6.6}{-4.4}{$\Psi$};
  \vctxt{16.5}{-20.5}{$r^3\alpha$};
  \vctxt{18.2}{-23.5}{$r\epsilon$};
  \vctxt{21.0}{-24.8}{$r^5\epsilon$};
  \vctxt{22.7}{-23.6}{$r^3\epsilon$};
  \vctxt{20.8}{-20.5}{$r\eta$};
  \vctxt{18.2}{-19.2}{$r^2\beta$};
  \markptp{6.6}{-4.4}{-2}{0};
  \markptp{6.6}{-4.4}{2}{-2};
  \markptq{6.6}{-4.4}{0}{2};
  \markptq{6.6}{-4.4}{0}{-2};
  \markptq{6.6}{-4.4}{2}{0};
  \markpts{6.6}{-4.4}{-2}{2};
\end{tikzpicture}
\end{center}
\caption{The nine combinatorial hexagons (adapted from
  \cite[Fig.~4.2]{Spectre}). Vertices of type $p$, $q$, and $s$ are
  colored in red, blue, and green, respectively. Type and orientation
  of the edges are given by their labels, the power of $r$ determining
  the orientation.}
\label{fig:combhex}
\end{figure}

Let $r$ denote counterclockwise rotation by $60$ degrees.  Each of the
$9$ displayed meta-tiles in standard orientation can be multiplied by
$r^m$ with $m\in \{ 0,1, \ldots, 5\}$. The first $7$ edge types are
directional, so $\alpha$, $r\ts\alpha$, etc.~are distinct. The
reference orientation for each edge is vertical, pointing up. 
(Recall that this is the positive real direction in our identification
of $\RR^2$ with $\CC$.) The
small mark used in \cite{Spectre} to indicate the orientation of an
edge has been replaced by the power of $r$ needed to map the standard
edge to the given one. The last edge type, $\eta$, is non-directional.
It is invariant under rotation by $180$ degrees, and occurs only in
three rotational variants. As a chain, $r^3 \eta = -\eta$.  The
vertices $p$ and $q$ have $3$-fold symmetry, so $r^2p=p$ and $r^2q=q$.
The third vertex, $s$, does not have any rotational symmetry, so
$s, rs, \ldots, r^5s$ are distinct.

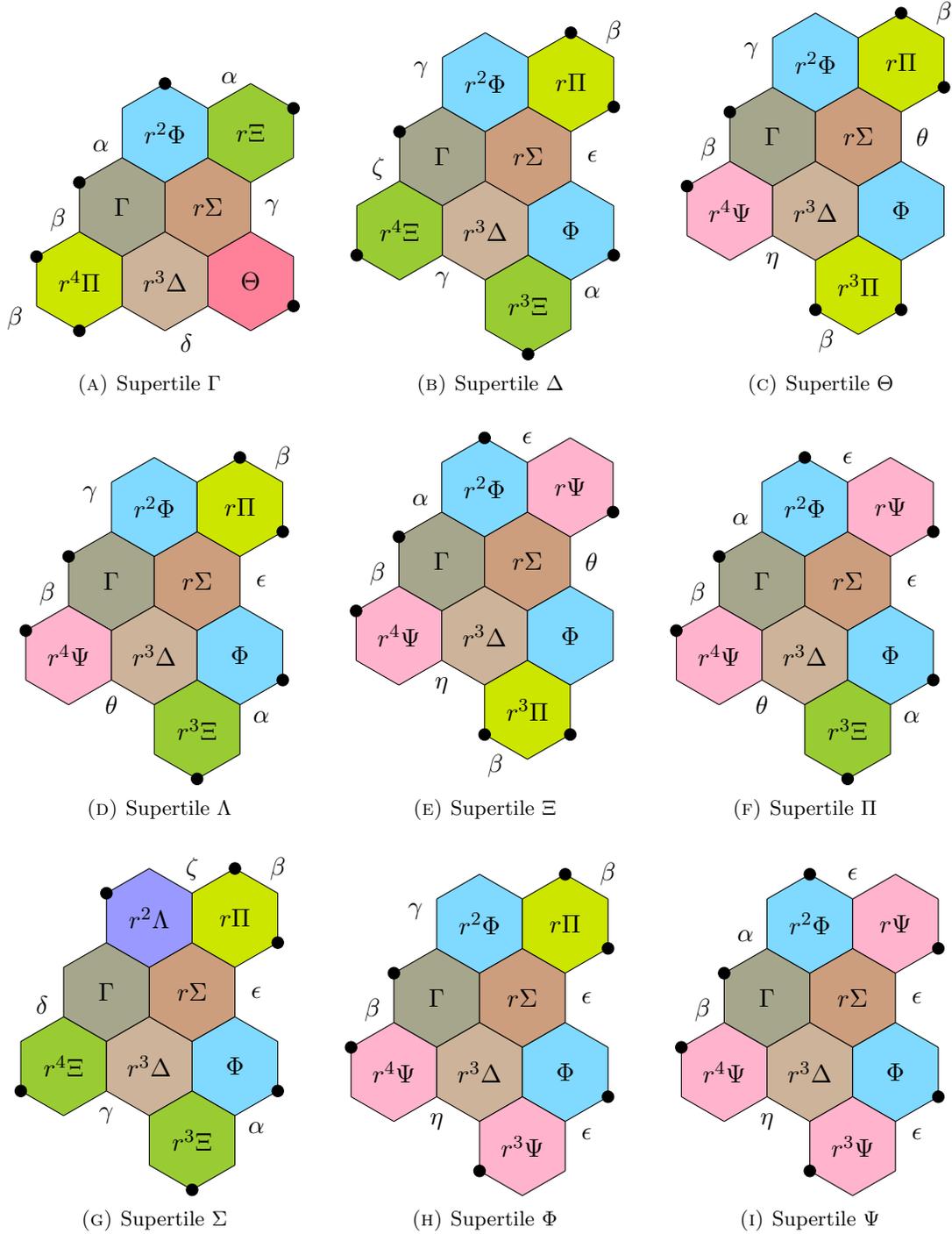
\begin{figure}[p]
\captionsetup{margin=0pt}%
\begin{center}
\subfloat[Supertile $\Gamma$]{%
\begin{tikzpicture}[x=3.75mm,y=3.75mm]
  \hexGamma{0}{0}{0};    \hctxt{0}{0}{$\Gamma$};
  \hexSigma{60}{1}{0};   \hctxt{1}{0}{$r\Sigma$};
  \hexPhi{120}{0}{1};    \hctxt{0}{1}{$r^2\Phi$};
  \hexXi{60}{1}{1};      \hctxt{1}{1}{$r\Xi$};
  \hexPi{240}{0}{-1};    \hctxt{0}{-1}{$r^4\Pi$};
  \hexDelta{300}{1}{-1}; \hctxt{1}{-1}{$r^3\Delta$};
  \hexTheta{0}{2}{-1};   \hctxt{2}{-1}{$\Theta$};
  \markpt{4}{4};
  \markpt{0}{2};
  \markpt{-2}{0};
  \markpt{0}{-4};
  \markpt{10}{-8};
  \markpt{10}{0};
  \vctxt{1}{3}{$\alpha$};
  \vctxt{-1}{1}{$\beta$};
  \vctxt{-3}{-2}{$\beta$};
  \vctxt{5}{-7}{$\delta$};
  \vctxt{9}{-3.5}{$\gamma$};
  \vctxt{7}{2.75}{$\alpha$};
\end{tikzpicture}%
} \qquad \subfloat[Supertile $\Delta$]{%
\begin{tikzpicture}[x=3.75mm,y=3.75mm]
  \hexGamma{0}{0}{0};    \hctxt{0}{0}{$\Gamma$};
  \hexSigma{60}{1}{0};   \hctxt{1}{0}{$r\Sigma$};
  \hexPhi{120}{0}{1};    \hctxt{0}{1}{$r^2\Phi$};
  \hexPi{60}{1}{1};      \hctxt{1}{1}{$r\Pi$};
  \hexXi{240}{0}{-1};    \hctxt{0}{-1}{$r^4\Xi$};
  \hexDelta{300}{1}{-1}; \hctxt{1}{-1}{$r^3\Delta$};
  \hexPhi{0}{2}{-1};     \hctxt{2}{-1}{$\Phi$};
  \hexXi{300}{2}{-2};    \hctxt{2}{-2}{$r^3\Xi$};
  \markpt{8}{2};
  \markpt{0}{2};
  \markpt{-2}{-2};
  \markpt{6}{-10};
  \markpt{10}{-8};
  \markpt{10}{-2};
  \vctxt{1}{4}{$\gamma$};
  \vctxt{-1}{1}{$\zeta$};
  \vctxt{2}{-5}{$\gamma$};
  \vctxt{9}{-9}{$\alpha$};
  \vctxt{9}{-3.5}{$\epsilon$};
  \vctxt{10}{1}{$\beta$};
\end{tikzpicture}%
} \qquad \subfloat[Supertile $\Theta$]{%
\begin{tikzpicture}[x=3.75mm,y=3.75mm]
  \hexGamma{0}{0}{0};    \hctxt{0}{0}{$\Gamma$};
  \hexSigma{60}{1}{0};   \hctxt{1}{0}{$r\Sigma$};
  \hexPhi{120}{0}{1};    \hctxt{0}{1}{$r^2\Phi$};
  \hexPi{60}{1}{1};      \hctxt{1}{1}{$r\Pi$};
  \hexPsi{240}{0}{-1};   \hctxt{0}{-1}{$r^4\Psi$};
  \hexDelta{300}{1}{-1}; \hctxt{1}{-1}{$r^3\Delta$};
  \hexPhi{0}{2}{-1};     \hctxt{2}{-1}{$\Phi$};
  \hexPi{300}{2}{-2};    \hctxt{2}{-2}{$r^3\Pi$};
  \markpt{8}{2};
  \markpt{0}{2};
  \markpt{-2}{0};
  \markpt{4}{-8};
  \markpt{8}{-10};
  \markpt{10}{-2};
  \vctxt{1}{4}{$\gamma$};
  \vctxt{-1}{1}{$\beta$};
  \vctxt{2}{-5}{$\eta$};
  \vctxt{4.5}{-9.5}{$\beta$};
  \vctxt{9}{-3.5}{$\theta$};
  \vctxt{10}{1}{$\beta$};
\end{tikzpicture}%
} \\ \subfloat[Supertile $\Lambda$]{%
\begin{tikzpicture}[x=3.75mm,y=3.75mm]
  \hexGamma{0}{0}{0};    \hctxt{0}{0}{$\Gamma$};
  \hexSigma{60}{1}{0};   \hctxt{1}{0}{$r\Sigma$};
  \hexPhi{120}{0}{1};    \hctxt{0}{1}{$r^2\Phi$};
  \hexPi{60}{1}{1};      \hctxt{1}{1}{$r\Pi$};
  \hexPsi{240}{0}{-1};   \hctxt{0}{-1}{$r^4\Psi$};
  \hexDelta{300}{1}{-1}; \hctxt{1}{-1}{$r^3\Delta$};
  \hexPhi{0}{2}{-1};     \hctxt{2}{-1}{$\Phi$};
  \hexXi{300}{2}{-2};    \hctxt{2}{-2}{$r^3\Xi$};
  \markpt{8}{2};
  \markpt{0}{2};
  \markpt{-2}{0};
  \markpt{6}{-10};
  \markpt{10}{-8};
  \markpt{10}{-2};
  \vctxt{1}{4}{$\gamma$};
  \vctxt{-1}{1}{$\beta$};
  \vctxt{2}{-5}{$\theta$};
  \vctxt{9}{-9}{$\alpha$};
  \vctxt{9}{-3.5}{$\epsilon$};
  \vctxt{10}{1}{$\beta$};
\end{tikzpicture}%
} \qquad \subfloat[Supertile $\Xi$]{%
\begin{tikzpicture}[x=3.75mm,y=3.75mm]
  \hexGamma{0}{0}{0};    \hctxt{0}{0}{$\Gamma$};
  \hexSigma{60}{1}{0};   \hctxt{1}{0}{$r\Sigma$};
  \hexPhi{120}{0}{1};    \hctxt{0}{1}{$r^2\Phi$};
  \hexPsi{60}{1}{1};     \hctxt{1}{1}{$r\Psi$};
  \hexPsi{240}{0}{-1};   \hctxt{0}{-1}{$r^4\Psi$};
  \hexDelta{300}{1}{-1}; \hctxt{1}{-1}{$r^3\Delta$};
  \hexPhi{0}{2}{-1};     \hctxt{2}{-1}{$\Phi$};
  \hexPi{300}{2}{-2};    \hctxt{2}{-2}{$r^3\Pi$};
  \markpt{4}{4};
  \markpt{0}{2};
  \markpt{-2}{0};
  \markpt{4}{-8};
  \markpt{8}{-10};
  \markpt{10}{-2};
  \vctxt{1}{3}{$\alpha$};
  \vctxt{-1}{1}{$\beta$};
  \vctxt{2}{-5}{$\eta$};
  \vctxt{4.5}{-9.5}{$\beta$};
  \vctxt{9}{-3.5}{$\theta$};
  \vctxt{6}{3}{$\epsilon$};
\end{tikzpicture}%
} \qquad \subfloat[Supertile $\Pi$]{%
\begin{tikzpicture}[x=3.75mm,y=3.75mm]
  \hexGamma{0}{0}{0};    \hctxt{0}{0}{$\Gamma$};
  \hexSigma{60}{1}{0};   \hctxt{1}{0}{$r\Sigma$};
  \hexPhi{120}{0}{1};    \hctxt{0}{1}{$r^2\Phi$};
  \hexPsi{60}{1}{1};     \hctxt{1}{1}{$r\Psi$};
  \hexPsi{240}{0}{-1};   \hctxt{0}{-1}{$r^4\Psi$};
  \hexDelta{300}{1}{-1}; \hctxt{1}{-1}{$r^3\Delta$};
  \hexPhi{0}{2}{-1};     \hctxt{2}{-1}{$\Phi$};
  \hexXi{300}{2}{-2};    \hctxt{2}{-2}{$r^3\Xi$};
  \markpt{4}{4};
  \markpt{0}{2};
  \markpt{-2}{0};
  \markpt{6}{-10};
  \markpt{10}{-8};
  \markpt{10}{-2};
  \vctxt{1}{3}{$\alpha$};
  \vctxt{-1}{1}{$\beta$};
  \vctxt{2}{-5}{$\theta$};
  \vctxt{9}{-9}{$\alpha$};
  \vctxt{9}{-3.5}{$\epsilon$};
  \vctxt{6}{3}{$\epsilon$};
\end{tikzpicture}%
} \\ \subfloat[Supertile $\Sigma$]{%
\begin{tikzpicture}[x=3.75mm,y=3.75mm]
  \hexGamma{0}{0}{0};    \hctxt{0}{0}{$\Gamma$};
  \hexSigma{60}{1}{0};   \hctxt{1}{0}{$r\Sigma$};
  \hexLambda{120}{0}{1}; \hctxt{0}{1}{$r^2\Lambda$};
  \hexPi{60}{1}{1};      \hctxt{1}{1}{$r\Pi$};
  \hexXi{240}{0}{-1};    \hctxt{0}{-1}{$r^4\Xi$};
  \hexDelta{300}{1}{-1}; \hctxt{1}{-1}{$r^3\Delta$};
  \hexPhi{0}{2}{-1};     \hctxt{2}{-1}{$\Phi$};
  \hexXi{300}{2}{-2};    \hctxt{2}{-2}{$r^3\Xi$};
  \markpt{8}{2};
  \markpt{2}{4};
  \markpt{-2}{-2};
  \markpt{6}{-10};
  \markpt{10}{-8};
  \markpt{10}{-2};
  \vctxt{6}{3}{$\zeta$};
  \vctxt{-1}{1}{$\delta$};
  \vctxt{2}{-5}{$\gamma$};
  \vctxt{9}{-9}{$\alpha$};
  \vctxt{9}{-3.5}{$\epsilon$};
  \vctxt{10}{1}{$\beta$};
\end{tikzpicture}%
} \qquad \subfloat[Supertile $\Phi$]{%
\begin{tikzpicture}[x=3.75mm,y=3.75mm]
  \hexGamma{0}{0}{0};    \hctxt{0}{0}{$\Gamma$};
  \hexSigma{60}{1}{0};   \hctxt{1}{0}{$r\Sigma$};
  \hexPhi{120}{0}{1};    \hctxt{0}{1}{$r^2\Phi$};
  \hexPi{60}{1}{1};      \hctxt{1}{1}{$r\Pi$};
  \hexPsi{240}{0}{-1};   \hctxt{0}{-1}{$r^4\Psi$};
  \hexDelta{300}{1}{-1}; \hctxt{1}{-1}{$r^3\Delta$};
  \hexPhi{0}{2}{-1};     \hctxt{2}{-1}{$\Phi$};
  \hexPsi{300}{2}{-2};   \hctxt{2}{-2}{$r^3\Psi$};
  \markpt{8}{2};
  \markpt{0}{2};
  \markpt{-2}{0};
  \markpt{4}{-8};
  \markpt{10}{-8};
  \markpt{10}{-2};
  \vctxt{1}{4}{$\gamma$};
  \vctxt{-1}{1}{$\beta$};
  \vctxt{2}{-5}{$\eta$};
  \vctxt{9}{-9}{$\epsilon$};
  \vctxt{9}{-3.5}{$\epsilon$};
  \vctxt{10}{1}{$\beta$};
\end{tikzpicture}%
} \qquad \subfloat[Supertile $\Psi$]{%
\begin{tikzpicture}[x=3.75mm,y=3.75mm]
  \hexGamma{0}{0}{0};    \hctxt{0}{0}{$\Gamma$};
  \hexSigma{60}{1}{0};   \hctxt{1}{0}{$r\Sigma$};
  \hexPhi{120}{0}{1};    \hctxt{0}{1}{$r^2\Phi$};
  \hexPsi{60}{1}{1};     \hctxt{1}{1}{$r\Psi$};
  \hexPsi{240}{0}{-1};   \hctxt{0}{-1}{$r^4\Psi$};
  \hexDelta{300}{1}{-1}; \hctxt{1}{-1}{$r^3\Delta$};
  \hexPhi{0}{2}{-1};     \hctxt{2}{-1}{$\Phi$};
  \hexPsi{300}{2}{-2};   \hctxt{2}{-2}{$r^3\Psi$};
  \markpt{4}{4};
  \markpt{0}{2};
  \markpt{-2}{0};
  \markpt{4}{-8};
  \markpt{10}{-8};
  \markpt{10}{-2};
  \vctxt{1}{3}{$\alpha$};
  \vctxt{-1}{1}{$\beta$};
  \vctxt{2}{-5}{$\eta$};
  \vctxt{9}{-9}{$\epsilon$};
  \vctxt{9}{-3.5}{$\epsilon$};
  \vctxt{6}{3}{$\epsilon$};
\end{tikzpicture}%
}%
\end{center}
\caption{Nine supertiles (adapted from \cite[Figure~5.1]{Spectre}).
  Each is drawn in the reverse handedness of the corresponding marked
  hexagon of Figure~\ref{fig:combhex}, preserving the handedness of
  the marked hexagons within it.}
\label{fig:inflhex}
\end{figure}

The substitution in terms of hexagons is shown in
Figure~\ref{fig:inflhex}, adapted from \cite[Fig.~5.1]{Spectre}.
Specifically, it shows the action of $\sigma^*$, which turns reflected
tiles into ordinary tiles.  The nine pictures show what happens when
you take the nine meta-tiles in standard orientation, reflect them
(horizontally) and then substitute.

\begin{theorem}\label{thm:cohomology}
  Let\/ $\Omega$ be the orbit closure of a Spectre $($or
  Hat-Turtle$\, )$ tiling. The first complex \v{C}ech cohomology of\/
  $\Omega$ is
\[
    \check{H}^1(\Omega, \CC) \, = \, \CC^4.
\]
Furthermore, this\/ $\CC^4$ decomposes as a\/ $\CC^2$ from the
fundamental\/ $r=\xi$ representation of the cyclic group\/ $C_6$ and
a\/ $\CC^2$ from the representation\/ $r=\xi^5$.  The eigenvalues of
the $($squared$\, )$ substitution on each\/ $\CC^2$ are\/
$4 \pm \sqrt{15}$, each with multiplicity\/ $1$.

The second complex \v{C}ech cohomology of\/ $\Omega$ is
\[
    \check{H}^2(\Omega, \CC) \, = \, \CC^{10},
\]
with a contribution\/ $\CC^2$ from each\/
$r \in \{ 1, -1, \xi, \xi^5 \}$ and a single\/ $\CC$ from\/ $r=\xi^2$
and $r=\xi^4$.
\end{theorem}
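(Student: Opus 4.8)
The plan is to compute the cohomology by the Anderson--Putnam method \cite{AP}, exploiting the substitutive structure recorded in Figures~\ref{fig:combhex} and \ref{fig:inflhex}. First I would build the CW-complex $\Gamma$ (the AP-complex) whose $2$-cells are the nine collared meta-tiles in their six rotational copies, glued along the shared edge types $\alpha,\dots,\eta$ and vertex types $p,q,s$ exactly as dictated by the incidences in Figure~\ref{fig:combhex}. Since the nine collared meta-tiles were introduced precisely so that the substitution forces the border in the sense of \cite{kel}, the space $\Omega$ is homeomorphic to the inverse limit of $\Gamma$ under the substitution, whence
\[
  \check H^{\,k}(\Omega,\CC)\;=\;\varinjlim\bigl(H^k(\Gamma,\CC),\,f^{*}\bigr),
\]
where $f$ is the substitution map. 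Because $\sigma$ interchanges the reflected and unreflected sectors, $f$ must be taken to be the \emph{squared} substitution $\sigma^{*}\sigma$; I would obtain its cohomology endomorphism by composing the map read off from Figure~\ref{fig:inflhex} (which realizes $\sigma^{*}$) with its mirror image (which realizes $\sigma$).

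The decisive simplification is to work $C_6$-equivariantly. Rotation $r$ by $60^{\circ}$ is a cellular automorphism of $\Gamma$ that commutes with the boundary maps and with $\sigma^{*}\sigma$, so the whole cellular cochain complex splits into the six isotypic components for the characters $r=\xi^{j}$, $j\in\{0,\dots,5\}$, and $f^{*}$ respects this splitting. Counting cells sector by sector is then routine: the free $s$-vertices and the seven directional edges each contribute the regular representation, so they appear in every sector; the $3$-fold symmetric vertices $p,q$ (with $r^2p=p$, $r^2q=q$) contribute only to $j\in\{0,3\}$; and the non-directional edge $\eta$ (with $r^3\eta=-\eta$) contributes only to $j\in\{1,3,5\}$. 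This yields per-sector cochain complexes $C^0_j\to C^1_j\to C^2_j$ of very small dimension, with the global Euler characteristic $54-45+10=19$ (and its refinement per sector) available as a consistency check.

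In each sector I would then read the two coboundary matrices off the incidence data of Figure~\ref{fig:combhex}---keeping careful track of edge orientations through the powers of $r$, and of the sign $r^3\eta=-\eta$---and compute their ranks to obtain $H^1(\Gamma)$ and $H^2(\Gamma)$ as graded $C_6$-representations. Next I would assemble the matrix of $\sigma^{*}$ on cochains from Figure~\ref{fig:inflhex}, push it to cohomology, and compose with its mirror to form $f^{*}=(\sigma^{*}\sigma)^{*}$; here one must remember that the reflection sends the character $\xi^{j}$ to $\xi^{-j}$, so it exchanges the sectors $j\leftrightarrow 6-j$, pairing $j=1$ with $j=5$ and $j=2$ with $j=4$ while fixing $j=0,3$. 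Diagonalizing $f^{*}$ in each sector then gives its eigenvalues.

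Finally, the \v{C}ech cohomology is the direct limit, which retains only the generalized eigenspaces on which $f^{*}$ acts invertibly, the nilpotent (eventually annihilated) part disappearing. I expect this to leave, in $H^1$, a single $\CC^2$ in each of the mutually mirror sectors $j=1$ and $j=5$, on which the squared substitution carries the two eigenvalues $4\pm\sqrt{15}$ (note $(4+\sqrt{15})(4-\sqrt{15})=1$, the expanding/contracting pair one expects from the later cut-and-project description), giving $\check H^1=\CC^4$; and in $H^2$ a $\CC^2$ in each of $j\in\{0,3,1,5\}$ together with a single $\CC$ in each of $j=2,4$, giving $\check H^2=\CC^{10}$. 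The main obstacle is not conceptual but bookkeeping: extracting the incidence and inflation matrices from the figures with the correct edge orientations and the $\eta$-sign, and correctly forming $\sigma^{*}\sigma$ from the reflected rule $\sigma^{*}$---any sign or orientation error propagates straight into the eigenvalues. Confirming border-forcing for the nine collared meta-tiles is the other point requiring care, but this is exactly what their collaring was designed to guarantee.
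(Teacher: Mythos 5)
Your proposal follows essentially the same route as the paper's proof: the Anderson--Putnam complex built from the nine collared meta-tiles, the $C_6$-isotypic splitting with exactly the paper's special handling of $p,q$ (sectors $r^2=1$) and $\eta$ (sectors $r^3=-1$), the squared substitution $\sigma^*\sigma$ realized by composing the matrix of $\sigma^*$ with its mirror, and the direct limit retaining only the non-nilpotent part. Your description of reflection as exchanging the sectors $j \leftrightarrow 6-j$ is just another way of stating the paper's device that, within a fixed sector, $M_i$ is the complex conjugate of $M_i^*$, and your per-sector cell counts and final answers match the paper's computation exactly.
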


\begin{proof} By the results of \cite{AP}, the \v{C}ech cohomology of
  a substitution tiling space is the direct limit of the (ordinary)
  cohomology of the Anderson--Putnam (AP) complex $\GAP$ under
  substitution. We will first compute the cohomology of $\GAP$ and
  then take the direct limit. Since we are working over the complex
  numbers, this is the same as the direct sum of the eigenspaces of
  $H^*(\GAP, \CC)$ under substitution with non-zero eigenvalue.

  The AP complex is built from one copy of each tile type via
  identifying edges (and therefore vertices) where two tiles can meet.
  This information is already encoded in Figure~\ref{fig:combhex}.
  Including rotations (as we must), there are $54$ faces ($9$ shapes
  in $6$ orientations), $45$ edges ($7$ in $6$ orientations and $\eta$
  in $3$ orientations), and $10$ vertices ($p$ and $q$ in $2$
  orientations and $s$ in $6$). 

  Fortunately, the (co)boundary maps on $\GAP$ commute with rotation,
  so instead of using $10 {\times} 45$ or $45 {\times} 54$ matrices,
  we can use $3{\times} 8$ and $8{\times} 9$ matrices with entries
  that are polynomials in $r$. These boundary maps can be read off
  from Figure~\ref{fig:combhex}. The boundary map $\partial^{}_1$ on
  edges is given by the matrix
\begin{equation}\label{eq:partial1}
  \begin{pmatrix} 1 & 0 & 0 & 0 & 1 & 0 & 0 & 0 \\ 
    0 & -r & 0 & 0 & -r & 0 & 1 & 1\nts {-}r \\ 
    -r^3 & r^4 & 1\nts {-}r^5 & r^2{-}r^4 & 0 & r^4{-}r^5
      & -r^5 & 0 \end{pmatrix},
\end{equation}
while the boundary map $\partial^{}_2$ of faces is given by the matrix
\begin{equation}\label{eq:partial2}
 \begin{pmatrix}
   r^3{-}r & -r^3 & 0 & -r^3 & r^3 & 0 & -r^3 & 0 & r^3 \\ 
   r^2{-}r^4 & -r & -r{+}r^2{-}r^3 & r^2{-}r
       & r^2{-}r^3 & r^2 & -r& r^2{-}r & r^2 \\ 
   r^5 & r{-}1 & -1 & -1 & 0 & 0 & r & -1 & 0 \\ 
   1 & 0 & 0 & 0 & 0 & 0 & -r^5 & 0 & 0 \\ 
   0 & r^5 & 0 & r^5 & -r & r^5{-}r & r^5
       & r^5{-}r^3 & -r{-}r^3{+}r^5 \\ 
   0 & r^2 & 0 & 0 & 0 & 0 & -1 & 0 & 0 \\ 
   0 & 0 & -r^2 & r & -r^2 & r & 0 & 0& 0  \\ 
   0 & 0 & r & 0 & r & 0 & 0 & r& r 
\end{pmatrix}
\end{equation}
 
The matrices $M_1^*$ and $M_2^*$ for the substitution on edges and
faces can similarly be read off from Figure~\ref{fig:inflhex}. We get
\[
  M^{*}_{1} \, = \,
  \begin{pmatrix} 
   0 & -r^5 & 0 & r^2 & 0 &  -r^5 & 0 & 0 \\ 
   -r^5 & 0 & r^2 & r & 0 & 0 & 0 & 0 \\ 
   0 & 0 & 0 & 0 & 0 & 0 & 0 & 0 \\ 
   0 & 0 & 0 & 0 & 0 & 0 & 0 & 0 \\ 
   r^2 & r & r{-}r^5 & -r^4 & r {+} r^2 \nts {-} r^5
      & r & r {+} r^2 \nts {-} r^4 {-} r^5
      & r {+} r^2\nts {-} r^4 {-} r^5 \\ 
   0 & 0 & 0 & 0 & 0 & 0 & 0 & 0 \\ 
   0 & 0 & r^2 & r^2\nts {-}r^5 & 0 & r^2 & r^2 & 0 \\ 
   r^3 & 0 & -1 & 0 & -1 & 0 & -1 & r^3 
\end{pmatrix},
\]
and
\[
  M^{*}_{2} \, = \,
  \begin{pmatrix} 
   1&1& 1 & 1 & 1 & 1 & 1 & 1 & 1 \\ 
   r^5 & r^5 & r^5 & r^5 & r^5 & r^5 & r^5 & r^5 & r^5 \\ 
   1 & 0 & 0 & 0 & 0 & 0 & 0 & 0 & 0 \\ 
   0 & 0 & 0 & 0 & 0 & 0 & r^2 & 0 & 0 \\ 
   r  & r^4{+}r^5 & 0 & r^5 & 0 & r^5 & r^4{+}r^5 & 0 & 0 \\ 
   r^4 & r & r{+}r^5 & r & r^5 & 0 & r & r & 0 \\ 
   r & r & r & r & r & r & r & r & r \\ 
   r^2 & 1\nts {+} r^2 & 1\nts {+} r^2 & 1\nts {+} r^2
       & 1\nts {+} r^2 & 1\nts {+} r^2 & 1
       & 1\nts {+} r^2 & 1\nts {+} r^2 \\ 
   0 & 0 & r^4 & r^4 & r{+}r^4 & r{+}r^4 & 0
       & r^4{+}r^5 & r{+}r^4{+}r^5 
\end{pmatrix}.
\]
The matrices describing two rounds of substitution, taking
unreflected meta-tiles to unreflected meta-tiles, are then $M_1^*M_1$
and $M_2^* M_2$, with the details as follows.

As in \cite{BGS}, we work with one representation of $C_6$ at a
time. That is, we replace $r$ with a power of $\xi$ in
(\ref{eq:partial1}) and (\ref{eq:partial2}) and compare kernels and
images of $\partial^{}_1$ and $\partial^{}_2$ and eigenspaces of
$M_1^* M_1$ and $M_2^* M_2$. Since we are computing cohomology rather
than homology, we are acting by $\partial^{}_1$ and $\partial^{}_2$ on
the right and our kernels and images will be sets of row vectors, not
column vectors. Since $r^3 \eta = -\eta$, the last column of
$\partial^{}_1$ and $M_1^*$ and the last row of $\partial^{}_2$ and
$M_1^*$ should be deleted when $r^3 \ne -1$, or in other words, when
$r=1$, $\xi^2$, or $\xi^4$. Since $r^2p=p$ and $r^2q=q$, the first two
rows of $\partial^{}_1$ should be deleted when $r^2 \ne 1$, that is,
when $r=\xi$, $\xi^2$, $\xi^4$ or $\xi^5$. Since reflection turns $r$
into $r^{-1}$, and since the inverse of a unit complex number is its
complex conjugate, the matrices $M_1$ and $M_2$ in each
representation are just the complex
conjugates of the matrices $M_1^*$ and $M_2^*$.

We summarize the calculations and results, one representation at a
time.

\begin{itemize}
\item[$r=1$.] When $r=1$, we have $3$ vertices, $7$ edges and $9$
  faces. The matrix $\partial^{}_1$ has rank $2$, while
  $\partial^{}_2$ has rank $5$.  Since $2+5=7$, this representation
  contributes nothing to $\check{H}^1$.  Since $9-5=4$, this
  representation contributes $\CC^4$ to $H^2(\GAP, \CC)$. Picking
  generators of this $4$-dimensional space, we see how these generators
  transform under $M_2^* M_2$. The $4 {\times} 4$ matrix that
  describes this action has rank $2$ and non-zero eigenvalues
  $(4 \pm \sqrt{15} \, )^2$. This representation thus only contributes
  $\CC^2$ to $\check{H}^2(\Omega, \CC)$. The generators of
  $\check{H}^2(\Omega,\CC)$ can in fact be represented by cochains
  that count Spectres and Mystics without distinguishing between the
  different kinds of collared Spectres.

\item[$r=\xi$.] When $r=\xi$, we have one vertex, $8$ edges and $9$
  faces.  The rank of $\partial^{}_1$ is $1$ while that of
  $\partial^{}_2$ is $5$.  This representation then contributes
  $\CC^2$ to $H^1(\GAP, \CC)$ and $\CC^4$ to $H^2(\GAP, \CC)$. Under
  substitution, the first cohomology transforms with eigenvalues
  $4 \pm \sqrt{15}$, while the second cohomology transforms with
  eigenvalues $4 \pm \sqrt{15}$ and $0$ (the latter twice). That is,
  all of the first cohomology survives to the direct limit,
  contributing $\CC^2$ to $\check{H}^1(\Omega, \CC)$, but only a
  $\CC^2$ subspace of $H^2(\GAP, \CC)$ survives to the direct limit.
\item[$r=\xi^2$.] When $r=\xi^2$, we have one vertex, $7$ edges and
  $9$ faces. The rank of $\partial^{}_1$ is $1$ and the rank of
  $\partial^{}_2$ is $6$, so we get no first cohomology and a
  contribution of $\CC^3$ to $H^2(\Gamma_{_\mathrm{\!
      AP}},\CC)$. However, substitution on the cokernel of
  $\partial^{}_2$ involves a $3{\times} 3$ matrix of rank $1$, whose
  only non-zero eigenvalue is $1$. Consequently, only a single factor
  of $\CC$ survives the direct limit to contribute to
  $\check{H}^2(\Omega, \CC)$.

\item[$r=-1$.] When $r=-1$, we have $3$ vertices, $8$ edges and $9$
  faces. The ranks of $\partial^{}_1$ and $\partial^{}_2$ are $3$ and
  $5$, respectively. This representation contributes nothing to
  $H^1(\GAP,\CC)$ and $\CC^4$ to $H^2(\GAP,\CC)$. The $4 {\times} 4$
  matrix describing substitution on this $\CC^4$ has rank $2$, with a
  double non-zero eigenvalue $1$. The direct limit of $\CC^4$ under
  substitution thus is $\CC^2$.

\item[$r=\xi^4$.] The calculations for $r=\xi^4$ are identical (up to
  complex conjugation) to those for $r=\xi^2$.

\item[$r=\xi^5$.] The calculations for $r=\xi^5$ are identical (up to
  complex conjugation) to those for $r=\xi$. \qedhere
\end{itemize}
\end{proof}

\begin{remark} 
  $H^2(\GAP,\CC)=\CC^{22}$ is much bigger than
  $\check{H}^2(\Omega, \CC)=\CC^{10}$. In every single representation,
  two of the eigenvalues of substitution on $H^2(\GAP,\CC)$ are
  zero. As a result, $2 \cdot 6=12$ factors of $\CC$ fail to survive
  to the direct limit.  This strongly suggests that our description of
  the substitutive structure of the Spectre tiling space is overly
  complicated. A different collaring scheme, or a different set of
  basic shapes that get substituted, would yield a different AP
  complex and could plausibly yield a simpler cohomology
  calculation. Since the publication of \cite{Spectre}, alternate
  substitutive schemes have been proposed \cite{Smith-J}.  It will be
  interesting to see how much the calculations can be streamlined by
  using them. \exend
\end{remark}

\begin{remark}
  The decomposition of the chain and cochain complexes of $\GAP$ is
  justified when working over $\CC$, but not when computing the
  integral cohomology $\check{H}^*(\Omega, \ZZ)$. To compute the
  integral cohomology, we must replace each entry of $\partial_1$,
  $\partial_2$, $M_1$ and $M_2$ with a $6 {\times} 6$ matrix (with
  adjustments for the rows and columns that refer to $\eta$, $p$ and
  $q$), replacing the number 1 with the identity matrix and replacing
  $r$ with the cyclic permutation matrix
\[
    \left ( \begin{smallmatrix} 0&0&0&0&0&1 \cr 1&0&0&0&0&0 \cr
      0&1&0&0&0&0 \cr 0&0&1&0&0&0 \cr 0&0&0&1&0&0 \cr 0&0&0&0&1&0
    \end{smallmatrix} \right ).
\]
Comparing the (left-)kernels and images of these matrices shows that
$\check{H}^1(\Omega, \ZZ)=\ZZ^4$ and that
$\check{H}^2(\Omega, \ZZ) = \ZZ^{10}$. \exend
\end{remark}

Let us now turn to the nature of the shape changes parametrized by
$\check{H}^1(\Omega, \CC)$.

\begin{theorem}\label{thm:symmetric}
  Any shape change obtained by changing the values of\/ $a$ and\/ $b$
  is the composition of a rescaling, a rotation, and a topological
  conjugacy.
\end{theorem}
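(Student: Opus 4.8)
The plan is to read the statement through the deformation theory of Clark and Sadun, which identifies shape changes of a fixed combinatorial tiling with classes in $\check H^1(\Omega,\RR^2)$ and characterises precisely those classes that realise topological conjugacies; the same strategy was used for the Hat in \cite{BGS}. Identifying $\RR^2$ with $\CC$, the relevant group is $\check H^1(\Omega,\CC)$, computed in Theorem~\ref{thm:cohomology}. The first step is to locate the $(a,b)$-deformations inside this group. Since an $a$-edge in orientation $r^k$ has displacement $a\ts\xi^k$ and a $b$-edge has displacement $\ii\ts b\ts\xi^k$, the shape cochain $f^{}_{a,b}$ depends complex-linearly on $(a,b)$, say $f^{}_{a,b}=a\ts u+b\ts v$ for fixed cochains $u,v$, and satisfies $f^{}_{a,b}(r\ts e)=\xi\ts f^{}_{a,b}(e)$. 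Thus $f^{}_{a,b}$ is $C_6$-equivariant, so its class lies in one of the two $\CC^2$ summands of $\check H^1(\Omega,\CC)$ from Theorem~\ref{thm:cohomology}; the anti-holomorphic variations that would involve $\bar a,\bar b$ land in the conjugate summand and do not arise here. On this $\CC^2$ the squared (orientation-preserving) substitution acts with the two simple eigenvalues $4\pm\sqrt{15}$.

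Next I would split this $\CC^2$ into the two substitution eigenlines and identify them geometrically. The CASPr tiling is self-similar, so its tautological displacement cochain is reproduced, up to the linear inflation factor, by the substitution; hence it is the eigenvector whose eigenvalue is the geometric inflation of the squared substitution, namely the eigenvalue of modulus greater than $1$, which is $4+\sqrt{15}$. Multiplying an entire tiling by a nonzero complex scalar $w$ multiplies every displacement by $w$ and so sends this eigenvector to $w$ times itself; since multiplication by $w\in\CC^{*}$ is exactly a rescaling composed with a rotation, the $(4+\sqrt{15})$-eigenline is precisely the set of deformations produced by rescalings and rotations.

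It then remains to treat the complementary $(4-\sqrt{15})$-eigenline, and here I would invoke the Clark--Sadun asymptotic negligibility criterion: a deformation yields a topologically conjugate tiling as soon as its size grows strictly more slowly than the patch diameters under iterated substitution. The patch diameters grow like $(4+\sqrt{15})^{n}$, while a class on the $(4-\sqrt{15})$-eigenline grows like $(4-\sqrt{15})^{n}$, and since $(4+\sqrt{15})(4-\sqrt{15})=1$ the ratio is $(4-\sqrt{15})^{2n}\to 0$. Every class on this eigenline is therefore asymptotically negligible and represents a topological conjugacy. Decomposing the class of $f^{}_{a',b'}-f^{}_{a,b}$ into its $(4+\sqrt{15})$-part and its $(4-\sqrt{15})$-part now writes any $(a,b)$-shape change as a rescaling and rotation followed by a topological conjugacy, which is the assertion.

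The main obstacle is the bookkeeping that pins down this clean split: one must verify that the geometric inflation of the orientation-preserving substitution is genuinely the larger eigenvalue $4+\sqrt{15}$ (so that similarities correspond to that eigenline rather than the other), that the CASPr shape cochain is the associated eigenvector, and that the negligibility threshold is indeed cleared by $4-\sqrt{15}$. Once these identifications are secured, the point emphasised in the abstract takes over: because $\check H^1(\Omega,\CC)$ is as small as it can be --- each relevant representation contributing only the two eigenlines and nothing more --- there is no room for a shape change that is not already a similarity up to topological conjugacy.
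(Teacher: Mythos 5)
Your overall strategy --- placing the $(a,b)$-deformations in the $r=\xi$ summand of $\check{H}^1(\Omega,\CC)$, identifying the CASPr class as the expanding eigenvector, and using asymptotic negligibility to conclude that deformations along the $(4-\sqrt{15})$-eigenline are topological conjugacies --- is exactly the paper's, and those parts are sound. But your final step contains a genuine gap. Write $E_{\pm}$ for the eigenlines with eigenvalue $4\pm\sqrt{15}$ inside the $r=\xi$ summand. Rescaling-and-rotating by $w\in\CC^{*}$ a tiling whose shape class is $f$ produces the class $w\ts f$; hence the deformations of \emph{that} tiling realizable by rescalings and rotations sweep out the complex line $\CC f$ through its own class, not the line $E_{+}$. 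The two lines coincide only when $f\in E_{+}$, i.e.\ essentially only for CASPr itself. For a general Tile($a,b$) the class $f_{a,b}$ has a nonzero $E_{-}$-component, so the $E_{+}$-part $v_{+}$ of the difference $f_{a',b'}-f_{a,b}$ is \emph{not} a rescaling/rotation deformation of Tile($a,b$): there is no $w$ with $f_{a,b}+v_{+}=w\ts f_{a,b}$. Your additive decomposition of the difference class therefore does not yield the assertion.

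The repair requires an ingredient you never invoke: the shape class of an actual tiling must have a nonzero component in the expanding subspace (this is the paper's citation of \cite{JS,CS1}). Granting this, one matches expanding components multiplicatively rather than additively. Writing $f_{a,b}=p+m$ and $f_{a',b'}=p'+m'$ with $p,p'\in E_{+}\setminus\{0\}$ and $m,m'\in E_{-}$, set $w=p'/p\in\CC^{*}$ (using $E_{+}\simeq\CC$); then $f_{a',b'}-w\ts f_{a,b}=m'-w\ts m\in E_{-}$, so Tile($a',b'$) is a topological conjugacy applied to the $w$-rescaled-and-rotated Tile($a,b$). This multiplicative matching is precisely what the paper means by decomposing any change of $a$ and $b$ into multiplying $(a,b)$ by a complex number and moving in the direction of the $4-\sqrt{15}$ eigenvector, and the nonzero-expanding-component fact is what guarantees the scalar $w$ exists. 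A second, smaller omission: for some parameter values the boundary of Tile($a,b$) fails to be a simple closed curve (it traces a figure 8); the paper handles these degenerate cases by passing to $n$-th order supertiles as the basic units, a point your argument would also need to absorb.
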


\begin{proof}
  Changing $a$ and $b$ preserves $6$-fold rotational symmetry. This
  means that the shape class of each tiling lies in the portion of
  $\check{H}^1(\Omega, \CC)$ that comes from the fundamental
  representation $r=\xi$. It is easy to check that changes in the
  parameters $a$ and $b$ correspond to linearly independent elements
  of $\check{H}^1(\Omega, \CC)$, so \emph{all} rotationally symmetric
  shape changes are MLD to changes in $a$ and $b$.

  Of course, multiplying $a$ and $b$ by the same complex number $z$ is
  equivalent to rescaling the tiling by $|z|$ and rotating it by the
  argument of $z$. The shape class corresponding to an actual tiling
  must have a non-zero component in the expanding subspace of
  $\check{H}^1(\Omega, \CC)$; see \cite{JS,CS1}.  Thus, every change
  to $a$ and $b$, and indeed every shape change that preserves
  $6$-fold rotational symmetry, must be MLD to a combination of
  changing $a$ and $b$ in the direction of the $\lambda = 4-\sqrt{15}$
  eigenvector and multiplying $a$ and $b$ by a complex number. The
  first is a topological conjugacy (as are MLD equivalences), while
  the second is rescaling and rotation.

  Note that not all values of $a$ and $b$ yield actual tiles
  Tile($a,b$). The boundary of Tile($a,b$) is a sequence of edges with
  displacements $a$ and $\ii \ts b$ times powers of $\xi$. In some
  cases, this traces a figure 8 instead of a simple closed curve.  In
  such cases, we must work instead with larger clusters, equivalent to
  what is obtained by substituting Hats and Mystics several
  times. Under substitution, the direction of the shape class in
  $\check{H}^1(\Omega, \CC)$ approaches the eigenvector with
  eigenvalue $4+\sqrt{15}$, yielding shapes whose boundaries
  \emph{are} simple closed curves. The only cost of this is that we
  have to treat the $n$-supertiles, rather than the individual Hats
  and Turtles (or Mystics), as the basic units of our tiling.
\end{proof} 

\begin{theorem}\label{thm:asymmetric}
  Any tiling space homeomorphic to a space of Spectre tilings is
  topologically conjugate to a linear transformation applied to an
  arbitrarily chosen Spectre tiling space.
\end{theorem}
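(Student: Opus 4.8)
\emph{Proof proposal.} The plan is to run the same deformation-theoretic machinery as in Theorem~\ref{thm:symmetric}, but to use the full group of real-linear transformations of the plane and to exploit the reflection $m$ that exchanges the two representations $r=\xi$ and $r=\xi^5$ of Theorem~\ref{thm:cohomology}. First I would reduce the statement to a statement about shape deformations: by the deformation theory of tiling spaces \cite{CS1} (compare \cite{JS}), any tiling space homeomorphic to $\Omega$ is topologically conjugate to a shape deformation $\Omega_c$ of $\Omega$, recorded by a class $c\in\check H^1(\Omega,\RR^2)$, where $\RR^2\cong\CC$ is the physical plane. Here $\dim_\RR\check H^1(\Omega,\RR^2)=8$, in agreement with $\check H^1(\Omega,\CC)=\CC^4$. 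This reduction is the main external input, and it is precisely where the smallness of the cohomology is used: no homeomorphic tiling space can carry more shape information than is catalogued by this four-complex-dimensional group.

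Next I would decompose $c$ under the squared substitution. Theorem~\ref{thm:cohomology} presents $\check H^1(\Omega,\CC)=\CC^4$ as a $\CC^2$ on which $C_6$ acts by $r=\xi$ plus a $\CC^2$ on which it acts by $r=\xi^5$, with the squared substitution acting on each $\CC^2$ with eigenvalues $4\pm\sqrt{15}$. Writing $E^+$ for the $(4+\sqrt{15}\,)$-eigenspace and $E^-$ for the $(4-\sqrt{15}\,)$-eigenspace, one has $\check H^1(\Omega,\RR^2)=E^+\oplus E^-$ with $\dim_\RR E^{\pm}=4$. Since $4-\sqrt{15}=(4+\sqrt{15}\,)^{-1}<1$, the $E^-$-component of $c$ is asymptotically negligible, so $\Omega_c$ is topologically conjugate to $\Omega_{c^+}$, where $c^+$ is the projection of $c$ onto $E^+$. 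This is exactly the contracting-direction argument already invoked in Theorem~\ref{thm:symmetric}, now applied in both representations simultaneously.

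It then remains to realize the expanding part by a linear transformation, which is the heart of the argument. Let $\Omega_0$ be the chosen Spectre space and $c_0\in\check H^1(\Omega,\RR^2)$ its canonical shape class; since $\Omega_0$ has $6$-fold symmetry and is an honest tiling, $c_0$ lies in, and is nonzero in, the expanding line of the $r=\xi$ representation by \cite{JS,CS1}. A real-linear map $A(z)=\alpha z+\beta\bar z$ of $\CC$ acts on shape classes by $A_* c_0=\alpha\,c_0+\beta\,m_* c_0$, where $m$ is the reflection (complex conjugation) of Section~\ref{sec:geometry}. The essential point is that $m$ interchanges the representations $r=\xi$ and $r=\xi^5$ while preserving the real eigenvalue $4+\sqrt{15}$, so $m_* c_0$ is a nonzero vector in the expanding line of the $r=\xi^5$ representation. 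Thus $\alpha\,c_0$ and $\beta\,m_* c_0$ lie in complementary lines of $E^+$, and $(\alpha,\beta)\mapsto A_* c_0$ is an $\RR$-linear isomorphism $\mathrm{Mat}_2(\RR)\cong E^+$. I expect the main obstacle to be exactly this last point, namely verifying that reflection carries the expanding eigenvector of one representation to the expanding eigenvector of the other, so that shears and reflections genuinely fill out the symmetry-breaking directions; this is the only place where one must track the interaction of $m$ with the substitution eigenspaces rather than quote the single-representation computation of Theorem~\ref{thm:symmetric}.

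Finally, for a class $c$ coming from an actual tiling, the expanding part $c^+$ corresponds to a non-degenerate geometric realization and hence avoids the locus $\{|\alpha|=|\beta|\}$ where $\det A=0$; so $c^+=A_* c_0$ for a unique $A\in\mathrm{GL}_2(\RR)$, and therefore $\Omega_{c^+}=A\cdot\Omega_0$ and $\Omega_c$ is topologically conjugate to $A\cdot\Omega_0$ (with orientation-reversing $A$ producing the mirror, anti-Spectre, spaces). The dimension bookkeeping is exact: the $8$ real dimensions of $\check H^1(\Omega,\RR^2)$ split as the $4$ dimensions of $\mathrm{GL}_2(\RR)$, namely $E^+$, plus the $4$ dimensions of asymptotically negligible topological conjugacies, namely $E^-$, leaving no room for any further shape change. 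This exhaustion is the qualitative content of the theorem.
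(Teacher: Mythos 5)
Your proposal is correct and takes essentially the same route as the paper: reduce via \cite{JS} to shape changes classified by $\check{H}^1(\Omega,\RR^2)\cong\RR^8$, then split these into four expanding dimensions realized by real-linear maps $z\mapsto\alpha z+\beta\bar{z}$ (rotation/scaling in the $r=\xi$ representation, shears in $r=\xi^5$) and four contracting dimensions that induce topological conjugacies --- which is exactly the paper's bookkeeping, stated there representation-by-representation rather than eigenvalue-by-eigenvalue. One inessential slip: the canonical class $c_0$ of an arbitrarily chosen Spectre space need not lie exactly in the expanding line (that holds for the self-similar CASPr representative, while e.g.\ Tile($1,1$) has a nonzero contracting component); since that component is itself absorbed by a topological conjugacy, your argument goes through unchanged.
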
 

\begin{proof}
  Every homeomorphism of tiling spaces with \emph{finite local
    complexity} (FLC) is homotopic to the composition of a shape
  change and an MLD equivalence \cite{JS}. This implies that any FLC
  tiling space that is homeormophic to a specific FLC tiling space
  $\Omega$ is MLD to the result of a shape change applied to
  $\Omega$. However, we have already identified all of the shape
  changes to the Spectre space. Those that preserve rotational
  symmetry are equivalent to varying $a$ and $b$, or, equivalently, to
  a combination of a topological conjugacy and a rescaling and
  rotation.

  Next, we consider shape changes from the $r=\xi^5$ representation
  that break rotational symmetry.  Among these are linear
  transformations, such as shears, that do not commute with
  rotation. Also among these are the eigenspace of substitution with
  eigenvalue $4-\sqrt{15}$. Together, these span the $\CC^2$ of
  $\check{H}^1(\Omega, \CC)$ that comes from $r=\xi^5$.

  Put another way, the entire space of shape changes has real
  dimension $8$. Four of these dimensions are linear transformations
  (two of which involve expansion and rotation, while the other two
  involve shears). The other four correspond to the $2$
  complex-dimensional eigenspace of substitution with eigenvalue
  $4-\sqrt{15}$, all of which induce topological conjugacies.
\end{proof} 

The proofs of Theorems~\ref{thm:symmetric} and \ref{thm:asymmetric}
are essentially the same as the proofs of analogous results about the
Hat tilings \cite{BGS}.  Once we know that $\check{H}^1(\Omega, \CC)$
is as small as it could possibly be, the rest follows.

\section{CASPr, the friendly Spectre}\label{sec:selfsim}

In what follows, we work exclusively with the meta-tiles, the
combinatorial hexagons from Figure~\ref{fig:combhex}, and call such
tilings Spectre tilings, too. In order to find a self-similar
representative of this family of Spectre tilings, we first determine
its edge vectors by solving an eigenvalue equation. 
Specifically, the square of the
edge inflation, $M_1^*M_1$, must scale the edge vectors by a factor
$\lambda = 4 + \sqrt{15}$. A solution of this eigenvalue equation,
with $\xi=\ee^{2\pi\ii/6}$ as before, is given by
\begin{equation}\label{eq:edges}
\begin{split}
     e^{}_\alpha  \, &= \,  2\ts\xi + (1-\xi) \lambda\ts , \\
     e^{}_\beta   \, &= \, 1 - 3\ts\xi + \xi \lambda\ts , \\
     e^{}_\gamma  \, &= \, -2 + 4\ts\xi + (2 - \xi) \lambda\ts , \\
     e^{}_\delta  \, &= \, -9 + 3\ts\xi + 3 \lambda\ts , \\
     e^{}_\epsilon \, &= \, 1 -\xi   + \lambda\ts , \\
     e^{}_\zeta   \, &= \, -1 - 4\xi + (1+\xi) \lambda\ts , \\
     e^{}_\theta  \, &= \, -1 + \xi  + 2 \lambda\ts , \\
     e^{}_\eta    \, &= \, 1 + 2\ts\xi + \lambda\ts . 
\end{split}
\end{equation}
All edge vectors are elements of the $\ZZ$-module
$\ZZ[\xi,\lambda] = \langle 1, \xi, \lambda, \lambda \xi\ts
\rangle^{}_{\ZZ}$. The edge vectors span the edge module $E$, which is
a submodule of $\ZZ[\xi,\lambda]$ of index $9$.

\begin{remark}\label{rem:max-order}
  Let us comment on the natural number-theoretic setting of our
  system.  The $\ZZ$-module $\cO = \ZZ[\xi, \lambda ]$ with
  $\xi=\ee^{2\pi\ii/6}$ and $\lambda = 4 + \sqrt{15}$ is a ring, and
  as such a non-maximal order of the quartic number field
  $K = \QQ (\sqrt{-3}, \sqrt{-5} \, )$, which (applying Hilbert's
  theorem) can also be written as $K = \QQ (\alpha)$ with
  $\alpha = \sqrt{5} \, \ee^{2 \pi \ii/12}$; see \cite{data} for a
  summary of properties of $K$, where it is number field 4.0.3600.3.

  The maximal order of $K$ is
  $\cO^{}_{K} = \langle 1, \alpha, \alpha^2\! /5, \alpha^3\! /5
  \rangle^{}_{\ZZ}$, which contains all algebraic integers from $K$,
  and $\cO$ is a submodule of index $3$. In fact, keeping track of the
  indices, one has
\[
  \ii \sqrt{3}\, \cO^{}_{K} \, \overset{3}{\subset} \,
  \cO \, \overset{3}{\subset} \, \cO^{}_{K} \ts ,
\]
where $\cO$ is not an ideal in $\cO^{}_{K}$, because $1 \in \cO$.  In
particular, $\cO$ is not Dedekind --- for instance, $3\ts \cO^{}_{K}$
is an ideal both in $\cO^{}_{\nts K}$ and in $\cO$, but it is not
invertible as a fractional $\cO$-ideal. Let us also remark that $\cO$
and $\cO^{}_{\nts K}$ have the same unit group, which is
\[
  \cO^{\times} \, = \: \cO^{\times}_{K} \, = \,
  \langle \ts\xi\ts \rangle  \times \langle \lambda \xi \ts\rangle
  \, \simeq \, C_6 \times C_{\infty} \ts .
\]
This makes working with $\cO$ rather natural in our setting. In
particular, important objects such as the return module will be
invariant under the action of the unit group.

Since the field $K$ has class number $2$, not all ideals of
$\cO^{}_{K}$ are principal, and neither are those of $\cO$.
Non-principal $\cO$-ideals will appear in our context, where we then
meet the special situation that they are still generated by two
elements. This follows from \cite[Thm.~2.3]{Greither}, because both
$\cO$ and $\cO^{}_{K}$ are Gorenstein rings, and no other ring lies
between them.  \exend
\end{remark}

\begin{figure}
\begin{center}
\includegraphics[width=0.85\textwidth]{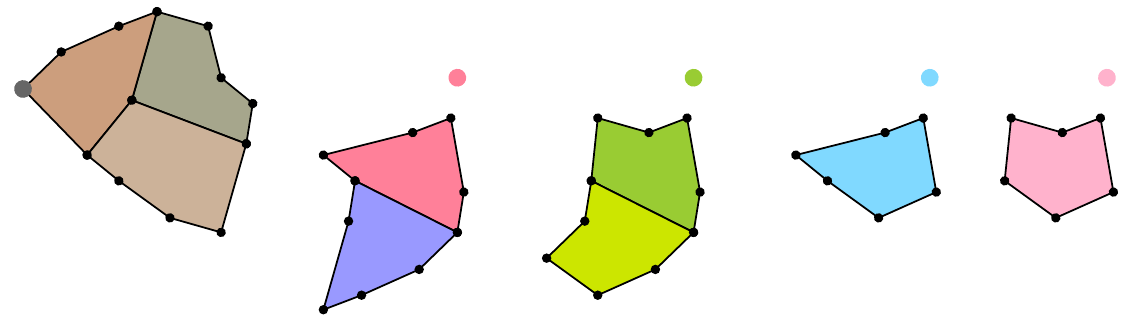}
\end{center}
\caption{\label{fig:tiles} Self-similar tiles of the CASPr
  tiling. From left to right are shown: the clusters
  $\Delta + \Gamma + \Sigma$, $\Lambda + \Theta$, and $\Pi + \Xi$, and
  the single tiles $\Phi$ and $\Psi$.  Tiles which always occur
  together (the left three clusters) are drawn together. For each of
  these (clusters of) tiles, also its control point is shown (in $5$
  colors).  }
\end{figure}

The self-similar tiles are now obtained by replacing the combinatorial
edges of the $9$ hexagon tiles from Figure~\ref{fig:combhex} by the
geometric edges from \eqref{eq:edges}, thereby producing the geometric
(non-regular) hexagon tiles shown in Figure~\ref{fig:tiles}. In the
latter, we have drawn certain clusters of tiles together, because
these tiles will always occur together in any legal tiling. By
construction, this self-similar tiling is related by a shape change to
the combinatorially equivalent tiling by regular hexagons, and hence
is related by a shape change to the original Spectre tiling.

The inflation of the geometric tiles can now be read off from
Figure~\ref{fig:inflhex}, again replacing the combinatorial edges by
the geometric ones. The inflation of the tiles and clusters from
Figure~\ref{fig:tiles} are shown in Figure~\ref{fig:inftiles}.  We
have drawn the (outer) supertile edges in different colors, depending
on the type of the supertile edge.  As one can see, if matching
supertile edges are joined, each edge type always has the same
environment on both sides, with the exception of the short $\beta$
edge (red), which on one side can either have a $\Gamma$ and a $\Xi$,
or a $\Gamma$ and a $\Psi$. However, the $\Xi$ and the $\Psi$ both
touch the $\beta$ superedge with their $\eta$ edge (green), so that
after one further inflation, also the $\beta$ superedge always has the
same environment on both sides. This proves that the inflation shown
in Figure~\ref{fig:inftiles} indeed forces the border, so that the
simplified cohomology computation with uncollared tiles is justified.
This is no surprise, of course, as the meta-tiles have been
constructed with this goal in mind.

\begin{figure}
\begin{center}
\includegraphics[width=0.6\textwidth]{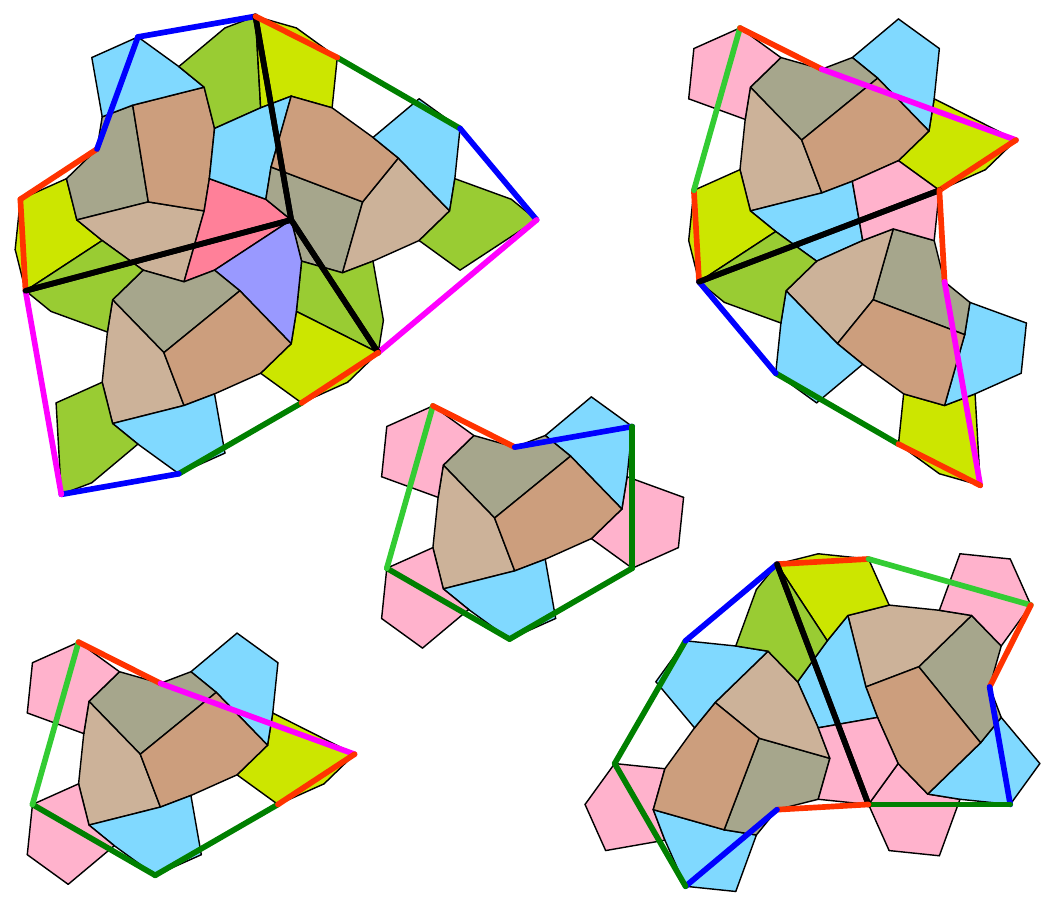}
\end{center}
\caption{\label{fig:inftiles} Inflation of the tile clusters
  $\Delta + \Gamma + \Sigma$ (top left), $\Lambda + \Theta$ (top
  right), $\Pi + \Xi$ (bottom right), $\Psi$ (centre), and $\Phi$
  (bottom left). The (outer) supertile edges are drawn in color, as a
  function of edge type: blue for $\alpha$, red for $\beta$, purple
  for $\gamma$, cyan for $\epsilon$, and green for $\eta$. The
  remaining edge types only occur in the interior of these clusters.}
\end{figure}

Let us explain one important subtlety that reflects the substitution
structure from Section~\ref{sec:top}. Obviously, the tiles in
Figure~\ref{fig:tiles} are not mirror-symmetric.  We call them
right-handed tiles. The outlines of the supertiles in
Figure~\ref{fig:inftiles} are left-handed, however. This is due to our
use of a square root of the inflation described earlier: in one
inflation step, we scale the tiling by $\sqrt{4+\sqrt{15}}$, reflect
it (thus obtaining a tiling of left-handed
supertiles), and replace each of these by a patch of right-handed
tiles as shown in Figure~\ref{fig:inftiles}. Due to the reflection,
this inflation is not a local subdivision of the scaled tiling,
but its square is.  A larger patch of tiles, generated by this
inflation, is shown in Figure~\ref{fig:tiling}, where one can see
that, indeed, the tiles $\Delta + \Gamma + \Sigma$ (drawn in three
different shades of brown) always occur together. The same holds for
the pairs $\Xi + \Psi$ (green and yellow) and $\Lambda + \Theta$ (blue
and red).  We call this self-similar Spectre tiling the \emph{friendly
  Spectre}, abbreviated as \emph{CASPr} (for
Cut-And-Symmetrically-Project).

\begin{figure}
\begin{center}
\includegraphics[width=0.95\textwidth]{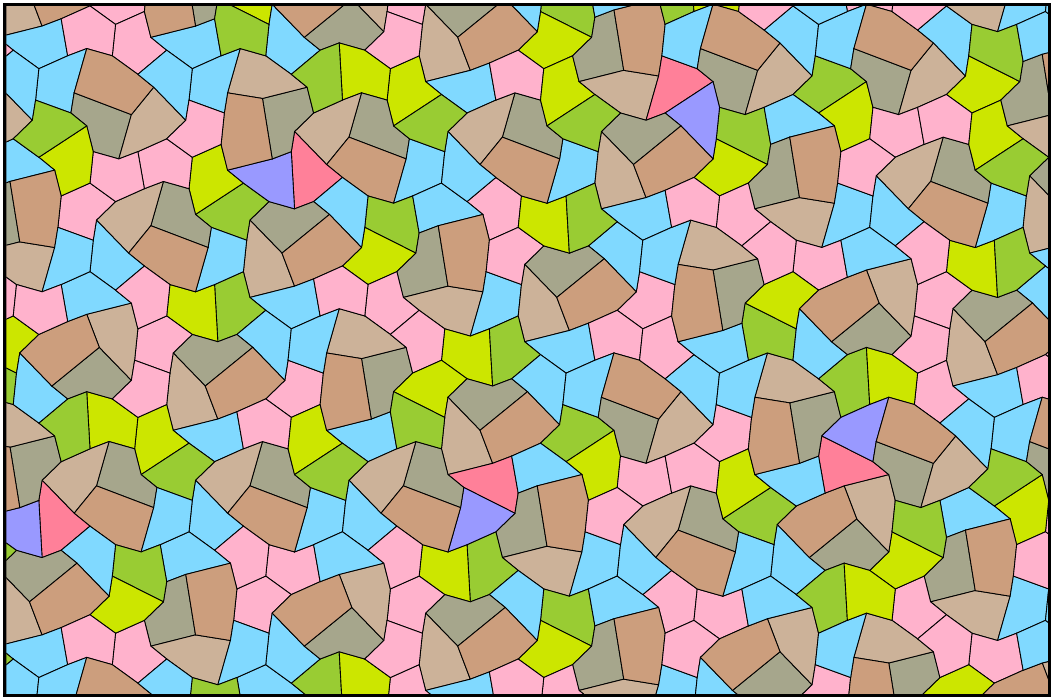}
\end{center}
\caption{\label{fig:tiling}
  Patch of a CASPr tiling.
} 
\end{figure}

Our method for taking the square root of the inflation
$\sigma^*\sigma$ deserves a closer look. The inflation $\sigma^*$ on
the left-handed tiles is conjugate to $\sigma$ via the horizontal
reflection $m$, meaning $\sigma^*= m \sigma m$, so that
$\sigma^*\sigma = (m \sigma)^2 = \widetilde{\sigma}^2$. In an
analogous way, we can write the inflations on \mbox{$1$-~and}
$2$-cohomology as
$M_1^*M^{}_1 = \widetilde{M}_1^{\, 2_{\vphantom{\chi}}}$ and
$M_2^*M^{}_2 = \widetilde{M}_2^{\, 2_{\vphantom{\chi}}}$,
respectively, in line with our approach above. Working with such
square roots is often computationally simpler than working with $M_i$
and $M_i^*$ separately. We note that the induced action of
$\widetilde{M}_1$ on the edge module $E$ leaves it invariant, whereas
$M_1$ maps it to its mirror image, which is different. Conversely,
$\widetilde{M}_1$ maps $\ZZ[\xi,\lambda]$ to a mirror image different
from $\ZZ[\xi,\lambda]$, whereas $M_1$ leaves it invariant.

Another relevant module is the return module $L$, which is the
$\ZZ$-span of all vectors translating a tile in a tiling to an
equivalent tile in the same tiling. This is a submodule of the edge
module $E$, given by the (right) kernel of $\partial^{}_1$, projected
to the edge module. The return module has index $9$ in the edge
module, and is invariant under $\widetilde{M}_1$, but not under
$M_1$. Simple bases of $E$ and $L$ in terms of the (above chosen)
standard basis of $\ZZ[\xi,\lambda]$ are given by
\begin{align}
  E \, &= \, \langle ( 1, 0, 0, 1 ), ( 0, -1, 1,-1 ),
           (-1,-1, 1, 1 ), ( 0, 1, 2, 1 ) \rangle^{}_{\ZZ}
     \quad \text{and} \\
  L \, &= \, \langle (-1,-1, 1,-2 ), ( 2, -1, 1, 1 ),
                 ( 2, 2, 1,-2 ), (-2, 1, 2, 2 ) \rangle^{}_{\ZZ} \ts .
\end{align}
In Figure~\ref{fig:tiles}, we have added special reference points,
so-called \emph{control points}, to each tile or cluster of
tiles. These have been chosen such that the set of control points of
all tiles in a tiling lie in a single translation orbit of the return
module, and such that the colored control point set is MLD with the
tiling.

\begin{remark}\label{rem:return}
  The return module $L$, viewed as a subset of $\CC$, is a
  $\ZZ$-module with generators $1+\xi+2\lambda+5\lambda\xi$,
  $3\xi+6\lambda\xi$, $3\lambda+3\lambda\xi$, and $9\lambda\xi$. As
  such, it has index $81$ in $\cO=\ZZ[\xi,\lambda]$. Since
  $\ZZ[\xi,\lambda] L \subseteq L$, it is an \emph{ideal} in $\cO$,
  thus matching the rotation and inflation symmetry of the CASPr
  tiling. However, it is not principal.

  Via a standard lattice reduction algorithm, one finds
  $L = \langle g^{}_{1}, g^{}_{2}, g^{}_{3}, g^{}_{4} \rangle^{}_{\ZZ}$
  with generators
\begin{align*}
  g^{}_{1} \, & = \, -1-\xi+\lambda-2\lambda\xi \ts , \\
  g^{}_{2} \, & = \, 1 - 2\ts\xi + 2\lambda +\lambda\xi
                \, = \, \xi \ts g^{}_{1} \ts , \\
  g^{}_{3} \, & = \, -2+\xi+2\lambda+2\lambda\xi \ts , \\
  g^{}_{4} \, & = \, -2-2\ts\xi-\lambda+2\lambda\xi \ts .
\end{align*}
With this, one can identify $L$ as the non-principal $\cO$-ideal
$L = (g^{}_{1}, g^{}_{3}) = (g^{}_{1}) + (g^{}_{3})$. Alternatively,
one can use generators in terms of $\alpha$, where one choice is
$L = \bigl(\frac{3}{5} \alpha^2 + 3 \alpha + 6, \frac{3}{5} \alpha^3 +
3 \alpha\bigr) = (3\xi + 3\alpha + 6, 3 \alpha + 3 \alpha \xi)$. We
also note that $L$ is an ideal both in $\cO$ and in $\cO^{}_{K}$, and
the given generators work for both variants. \exend
\end{remark}

\section{Embedding the Spectre}\label{sec:embed}

Equipped with the return module, we can now lift the Spectre tiling,
or rather its set of control points, to a \emph{cut-and-project
  scheme} (CPS) \cite[Sec.~7.2]{TAO} -- provided the Spectre tiling
has pure-point dynamical spectrum. The latter can be verified with the
(generalized) overlap algorithm \cite{Sol97,AL} (which we have done
prior to this step), or it can be proved in retrospect, after lifting
the tiling (see below).

An element of $\ZZ[\xi,\lambda]\subset\CC$ can be lifted to $\CC^2$ by
pairing it with its Galois conjugate (Minkowski embedding), where
$\xi \mapsto \bar{\xi}$ and $\lambda \mapsto 8 - \lambda$. This
induces a corresponding lifting of the return module
$L \subset \ZZ[\xi,\lambda]$, and results in the CPS
\begin{equation}\label{eq:CPS}
\renewcommand{\arraystretch}{1.2}\begin{array}{r@{}ccccc@{}l}
   \\  & \CC & \xleftarrow{\;\;\; \pi \;\;\; }
   & \CC \times \CC &
   \xrightarrow{\;\: \pi^{}_{\text{int}} \;\: } & \CC & \\
   & \cup & & \cup & & \cup & \hspace*{-1.5ex}
   \raisebox{1pt}{\text{\footnotesize dense}} \\
   & \pi(\cL) & \xleftarrow{\;\ts 1:1 \;\ts } & \cL &
     \xrightarrow{ \qquad } &\pi^{}_{\text{int}} (\cL) & \\
   & \| & & & & \| & \\
   & L & \multicolumn{3}{c}{\xrightarrow{\qquad\quad\quad
    \star \,\; \quad\quad\qquad}}
   &  L^{\star}  & \\ \\
\end{array}\renewcommand{\arraystretch}{1}
\end{equation}
where $\cL = \{ (x, x^{\star}) : x \in L \}$ is the Minkowski
embedding of the return module. By our choice, it agrees with the
lattice generated by the lifted control points, and we have
$L = \pi(\cL)$. Note that this simple connection between the return
module and the embedding lattice only works for self-similar tilings.
This is the reason for the constructing a self-similar member of the
Spectre tiling class. To continue, we shall also need the dual lattice
$\cL^*$ (note the different star symbol), as defined with respect to
the standard inner product of $\RR^4 \simeq \CC^2$; see
Remark~\ref{rem:dual} below for more.

\begin{figure}
\begin{center}
\includegraphics[width=0.8\textwidth]{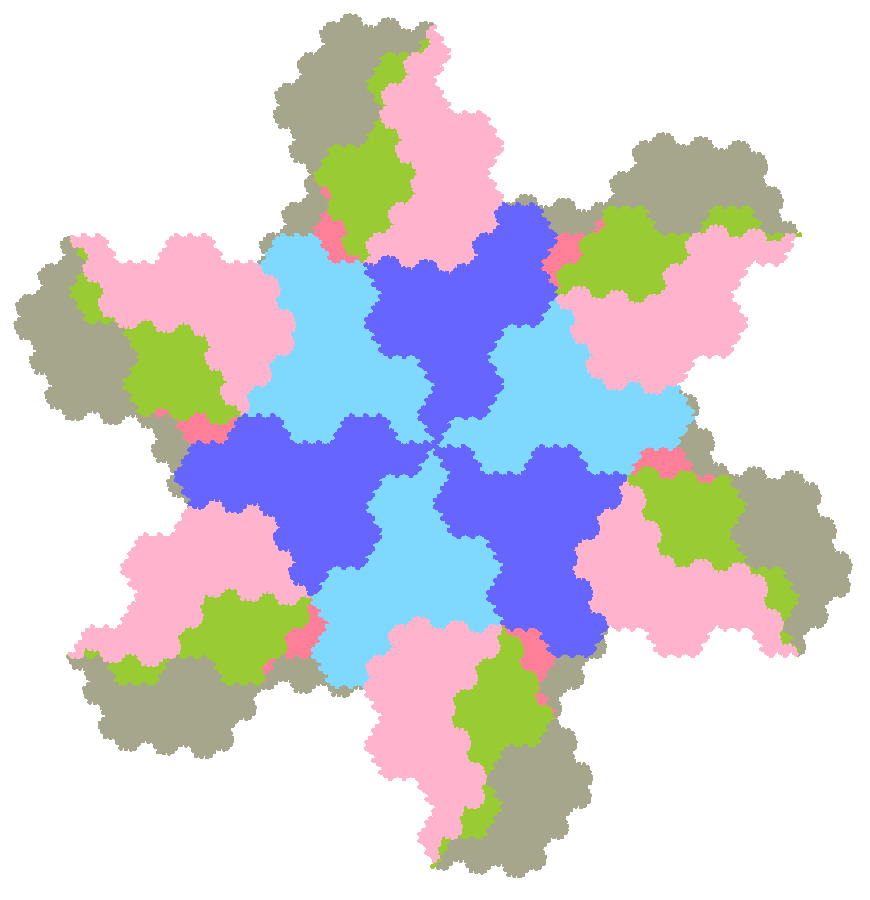}
\end{center}
\caption{\label{fig:win} Window system for the CPS of the CASPr
  tiling. The different subwindows are $5$-colored according to the
  tile (cluster) type to which the control points belong. The light
  and dark blue subwindows in the middle belong to the same tile type;
  we have chosen different colors only to distinguish distinct
  orientations.  }
\end{figure}

If we lift all control points of an inflation fixed point tiling, and
project them to internal space, coloring them according to the tile or
cluster type to which they belong, we obtain the picture shown in
Figure~\ref{fig:win}. Here, one can distinguish different subwindows,
in which the projected control points of a given type are dense.  Each
subwindow has fractal boundaries with Hausdorff dimension
\begin{equation}\label{eq:Hausdorff}
  d^{}_{\mathrm{H}} \, = \, \myfrac{\log(5+2\sqrt{6} \,)}
  {\log(4+\sqrt{15}\, )} \, \approx \, 1.110{\ts}977 \ts ,
\end{equation}
as can be extracted from the orbit separation dimension of the CASPr
tiling; see \cite{BGG} for details.  The fractal nature appears on
different scales, but (unlike the window for the CAP tiling) the
Hausdorff dimension seems to be the same everywhere along the
boundary. This certainly deserves a closer analysis in the future.

\begin{remark}
  A more detailed approach would proceed as follows. One starts with
  the inflation rule for the five effective prototiles from
  Figure~\ref{fig:tiles}, and considers a fixed point of (a suitable
  even power of) the inflation. With respect to the control points,
  this defines the (translational) inflation displacement matrix,
  which has dimension $30$ in this case (from the six distinct
  orientations of each of the five prototiles). This gives a fixed
  point equation for the $5$-color Delone set of control points.

  This equation can then be lifted via the $\star$-map to internal
  space, where (upon taking closures) it turns into a contractive
  iterated function system on $(\cK \CC)^{30}$, where $\cK \CC$ is the
  space of non-empty compact subsets of $\CC$ equipped with the
  Hausdorff distance. Its unique attractor consists of the $30$
  subwindows for the five types of control points, in six orientations
  each. When explicitly implementing this into an algebraic program,
  one can check \cite{Jan} via the random tracing algorithm (known as
  the `chaos game') that one indeed obtains the window system of
  Figure~\ref{fig:win}. The justification of this step relies on
  Elton's ergodic theorem \cite{Elton}; see \cite{BGMM} for further
  details.  \exend
\end{remark}

In order to verify that the Spectre tiling has pure-point spectrum, it
now suffices to show that the control point density as determined from
the CPS agrees with the true control point density \cite{BL}.  This
then means that the windows for the different tile types have no
overlap of positive measure. The control point density from the CPS is
given by the area $A$ of the window, divided by the volume $V$ of a
unit cell of $\cL$. For the unit cell volume, we obtain
\[
  V \, = \, \myfrac{3}{4} \, \det
  \begin{pmatrix} 1 & \lambda \\ 1 & 8 - \lambda
  \end{pmatrix}^2 \cdot 81 \, = \, 3645 \ts ,
\]
where $81$ is the index of $\cL$ in the lift of $\ZZ[\xi,\lambda]$.
To determine the area $A$ of the window, one can convince oneself that
the window is a fundamental domain of a triangular lattice, with a
generating vector $d = 31 + 4(\xi-\lambda) - \lambda\xi$, which gives
a window area of
\[
  A \, = \,  |d|^2 \, = \,
  \myfrac{135 \sqrt{3}}{2} \, (8 - \lambda) \ts .
\]
The control point density from the CPS is then given by
\[
  \rho^{}_1 \, = \, \myfrac{A}{V} \, = \,
  \myfrac{(8-\lambda) \sqrt{3}}{54} \ts .
\]
  
This density $\rho^{}_1$ now has to be compared to the true control
point density, which can be computed as follows. The right
Perron--Frobenius eigenvector of the tile inflation matrix contains
the relative frequences of the different tile types. If we normalize
the sum of these frequences to $1$, we obtain the vector
\[
  \bs{f} \, = \, (8-\lambda, 8-\lambda, 63-8\lambda, 63-8\lambda,
       15\lambda-118, 15\lambda-118, 8-\lambda,
       14\lambda-110, 197-25\lambda) \ts .
\]
We see that there is a triple and two pairs of tiles with the same
frequency. These form the tile clusters which always occur together.
Taking the scalar product of the frequency vector and the vector of
tile areas computed with the self-similar tile edges, we obtain the
average tile area, which evaluates to $90 \sqrt{3}$. The tile density
then is $ \sqrt{3}/270$. However, we now have to remember that we were
counting only one control point per tile cluster, so that we must
multiply this density with the sum of the \emph{distinct} frequencies
in the frequency vector $\bs{f}$. This then gives a total control
point density of
\[
  \rho^{}_2 \, = \, \myfrac{5(8-\lambda) \sqrt{3}}{270} \, = \,
  \myfrac{(8-\lambda)\sqrt{3}}{54} \, = \,  \rho^{}_1 \ts .
\]
Hence, the two control point densities agree, which proves
the following result.

\begin{theorem}\label{thm:model}
  The control points of the CASPr tiling comprise a full-density subset
  of the\/ $5$-color regular model set defined by the window system
  from Figure~$\ref{fig:win}$. They are dynamically defined Rauzy
  fractals, whose boundaries have the Hausdorff dimension from
  Eq.~$\eqref{eq:Hausdorff}$.

  As such, the CASPr tiling has pure-point diffraction, with the
  Fourier module
\[
      L^{\circledast} \ts =  \, \pi^{}_{\mathrm{int}} (\cL^*) \ts .
\]
The latter agrees with the dynamical pure-point spectrum of the CASPr
tiling, when viewed as a dynamical system under the translation action
of\/ $\RR^2$. This system is strictly ergodic and has continuously
representable eigenfunctions.  \qed
\end{theorem}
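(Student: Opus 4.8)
The plan is to turn the density coincidence $\rho^{}_1 = \rho^{}_2$ established just above into the regular-model-set property, and then read off the spectral and dynamical conclusions from the general theory of model sets \cite{TAO,BL}.

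First I would record the inclusion of the color-decorated control point set $\Lambda$ into the model set cut out by the total window $W = \bigcup_i W_i$ of Figure~\ref{fig:win}. By construction the star-image of every control point lies in the appropriate subwindow $W_i$, since the $W_i$ are precisely the (closures of the) components of the attractor of the contractive iterated function system obtained by lifting the inflation to internal space; hence $\Lambda \subseteq \smoplam(W)$ color by color. The subwindows are therefore dynamically defined self-affine sets, i.e.\ Rauzy fractals, and their boundaries have the Hausdorff dimension \eqref{eq:Hausdorff}, which I would quote from the orbit separation dimension computation of \cite{BGG}. In particular $d^{}_{\mathrm{H}} < 2$, so each $\partial W_i$ has planar Lebesgue measure zero, $W$ is Riemann measurable, and $\smoplam(W)$ is a \emph{regular} model set with density $\dens\bigl(\smoplam(W)\bigr) = A/V = \rho^{}_1$.

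Next comes the crux, the density comparison. From $\Lambda \subseteq \smoplam(W)$ one has $\rho^{}_2 \le \rho^{}_1$, and the Perron--Frobenius computation above gives equality. Following \cite{BL}, the equality of the measured density with the CPS density forces the subwindows for distinct tile (cluster) types to overlap only in sets of measure zero and the lifted control points to be uniformly distributed in each $W_i$; thus $\Lambda$ is a full-density subset of the $5$-color regular model set. Regularity then yields pure-point diffraction by the standard computation, with the Bragg positions forming the Fourier module $L^{\circledast} = \pi^{}_{\mathrm{int}}(\cL^*)$, the image of the dual lattice; I would take this last identification from \cite{TAO}.

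Finally, for the dynamical statements I would use that the CASPr inflation is primitive and forces the border (Section~\ref{sec:selfsim}), so the hull $\Omega$ is minimal and uniquely ergodic, hence strictly ergodic, under the $\RR^2$-action. The Baake--Lenz equivalence \cite{BL} of pure-point diffraction and pure-point dynamical spectrum then identifies the dynamical spectrum with $\pi^{}_{\mathrm{int}}(\cL^*)$, and for regular model sets $\Omega$ is, via the torus parametrization $\Omega \to \CC^2/\cL$, an almost-everywhere one-to-one extension of the minimal rotation on $\CC^2/\cL$, whence the eigenfunctions are pulled back from continuous characters of the torus and are continuously representable. I expect the main obstacle to be the rigorous passage from the bare density equality to the no-overlap conclusion: one must show that the lifted control points equidistribute across the fractal subwindows and deposit no density on their boundaries, which is exactly where the measure-zero boundary from \eqref{eq:Hausdorff} and the uniform-distribution machinery behind \cite{BL} carry the weight. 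The Hausdorff-dimension value itself I would not reprove, but cite from \cite{BGG}.
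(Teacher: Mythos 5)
Your proposal takes essentially the same route as the paper: the crux in both is the density coincidence $\rho^{}_1 = \rho^{}_2$ (window area over lattice covolume versus the Perron--Frobenius frequency computation), which via \cite{BL} forces the subwindows to overlap only in measure zero and exhibits the control points as a full-density subset of the regular model set, with the diffraction, Fourier-module, and dynamical conclusions then drawn from standard model-set theory in \cite{TAO,BL} and the Hausdorff dimension quoted from \cite{BGG}. The details you add --- the inclusion $\Lambda \subseteq \smoplam(W)$ giving $\rho^{}_2 \le \rho^{}_1$, regularity from the measure-zero fractal boundaries, and strict ergodicity plus the torus parametrization for continuous eigenfunctions --- are precisely the steps the paper leaves implicit in its citations, so your write-up is a correct (and somewhat more explicit) rendering of the paper's own argument.
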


\begin{remark}\label{rem:dual}
  Let us expand on the role of $L^{\circledast}$ within the number
  field $K = \QQ (\alpha)$ from Remark~\ref{rem:max-order}. Observe
  first that the dual module of the non-maximal order
  $\cO = \ZZ[\xi,\lambda]$ is
\[
  \ZZ [ \xi,\lambda ]^* \, = \: \myfrac{\ii \sqrt{5}}{15}
  \, \ZZ [ \xi, \lambda ] \ts ,
\]
where $\ZZ [ \xi, \lambda ]$ is a submodule of its dual of index
$45^2$. The dual of a $\ZZ$-module $M \subset K$ is defined as
$M^* = \{ y \in K : x . y + (x.y)^{\prime} \in \ZZ \text{ for all }
x\in M\}$, with $x.y \defeq \frac{1}{2}( \bar{x} y + x \bar{y} )$.
Here, $(.)^{\prime}$ is the non-trivial algebraic conjugation in
$\QQ( \sqrt{15}\,)$, given by $\sqrt{15} \mapsto -\sqrt{15}$.  Note
that $x.y$ lies in this field for all $x,y\in K$, because
$x.y = \mathrm{Re} (\bar{x}y) \in K \nts \cap \RR = \QQ(\sqrt{15}\,)$,
which is the maximal real subfield of $K$. This form is $\RR$-linear
and designed to match the duality notion in Euclidean $4$-space,
relative to the standard inner product, as this fits best to the use
of the Fourier transform.

Further, the dual of the maximal order is
$\cO^{*}_{K} = \frac{\sqrt{15}}{15}\ts \cO^{}_{K}$. In fact, one has
\[
  L \, \overset{9^2}{\subset} \, \cO \, \overset{3}{\subset} \,
  \cO^{}_{K} \, \overset{15^2}{\subset} \, \cO^{*}_{K} =
  \myfrac{\sqrt{15}}{15} \ts \cO^{}_{K} \, \overset{3}{\subset} \,
  \cO^{*} = \myfrac{\ii\sqrt{5}}{15}\ts \cO \, \overset{9^2}{\subset} \,
  L^{*} ,
\]
again keeping track of the indices, and all matches up so that
$L^{\circledast} = L^*$ in this sense, and so that
$L^{\circledast} = \pi (\cL^*)$, where $\cL^*$ is the (standard) dual
of the lattice $\cL$ in $\CC^2 \simeq \RR^4$. The justification of
this computation comes from the dual CPS, which (in our case) can be
combined with the original CPS because $\CC$ and $\CC^2$ are self-dual
as LCAGs.

We still need a good way to express $L^*$.  In
Remark~\ref{rem:return}, we identified the return module as the
non-principal ideal $(g^{}_{1}, g^{}_{3})$ of $\ZZ[\xi, \lambda ]$.
Now, if $\mathrm{N} (z)$ denotes the field norm of $z\in K$, which is
$\mathrm{N} (z) = z \bar{z} z^{\ts\prime} \bar{z}^{\ts\prime}$, one
can verify that the dual of a principal $\cO$-ideal is
\[
  (z)^* \, = \, \myfrac{1}{\bar{z}}\ts  \ZZ[\xi,\lambda ]^*
  \, = \, \myfrac{z z^{\ts\prime} \bar{z}^{\ts\prime}}{\mathrm{N} (z)}
  \, \myfrac{\ii \sqrt{5}}{15} \, \ZZ[\xi, \lambda ] \ts .
\]
Then, with $L = (g^{}_{1}) + (g^{}_{3})$ from Remark~\ref{rem:return},
some explicit computation leads to
\[
  L^{\circledast} \, = \; L^{*} \, = \;
  (g^{}_{1})^* \nts\cap (g^{}_{3})^* \, = \,
  \myfrac{\ii \sqrt{5}}{135} \ts L \ts .
\]
This shows that $L^{\circledast}$ is a non-principal, fractional
ideal, whose generators can now be given as (fractional) multiples of
the generators of $L$ from Remark~\ref{rem:return}, which is one of
the simplest ways to pin down the Fourier module.  \exend
\end{remark}

\section{The Spectre is MLD to a re-projected model
  set}\label{sec:deform}

Consider what happens when we take the same $4$-dimensional total
space as for the CASPr tiling, the same lattice, and the same
acceptance strip, only we vary the projection from $\RR^4$ to $\RR^2$.
Varying the projection moves each point in $\RR^2$ by a linear
function of its corresponding coordinates in perpendicular space. This
results in a topological conjugacy, since the perpendicular
coordinates are `weakly pattern equivariant', but not an MLD
equivalence, insofar as the perpendicular coordinates cannot be
determined exactly from the local pattern.

In particular, the tilings obtained by varying the projection in the
CASPr tiling form a (real) $4$-dimensional family that is topologically
conjugate to CASPr, but not MLD.  However, we have already determined
that, up to MLD equivalence, the set of tilings that are topologically
conjugate to CASPr is a connected $4$-dimensional family. The upshot
is that all tilings that are topologically conjugate to the CASPr are
MLD to reprojections of the CASPr control points. In particular, all
of the tilings that are obtained by varying the (complex) ratio $a:b$,
including the original Spectre tiling, are MLD to reprojections of
CASPr.

Let us illustrate this with two different reprojections. The first and
simpler one relies on the CASPr tiling being combinatorially
equivalent to a tiling of regular hexagons (compare
Figure~\ref{fig:combhex}), which have vertices in a simple hexagonal
lattice. We can therefore index the vertices of a CASPr tiling also
with vectors from that lattice.  The correspondence between the
$4$-dimensional CASPr indices and the new $2$-dimensional hexagonal
indices of the CASPr vertices defines a linear map from the CASPr
return module $L$ to the hexagonal lattice. If we choose the right
orientation and scale for the latter, this linear map is the
reprojection of the CASPr control points we are looking for. The right
scale is easily obtained by requiring that the regular hexagons are in
area equal to the average area of the CASPr tiles, which is known. In
the left panel of Figure~\ref{fig:hexrepro}, we show a patch of a
CASPr tiling reprojected in this way, along with its reprojected
control points. The CASPr tiles are considerably distorted under
reprojection, but clearly recognizable. For comparison, on the right
hand side of the figure, we show the corresponding regular hexagon
tiling, which is obviously MLD to the reprojected CASPr tiling: The
control points of the two tilings agree exactly.

\begin{figure}
  \centerline{
    \includegraphics[width=0.38\textwidth]{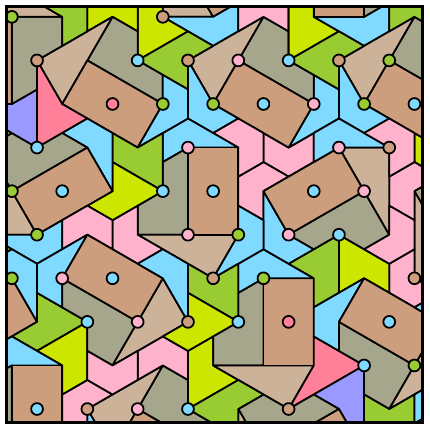}\hfil
    \includegraphics[width=0.38\textwidth]{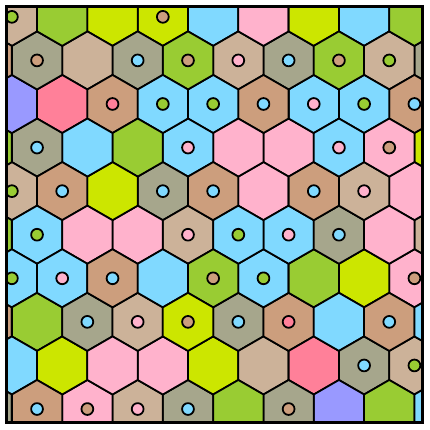}
  }
  \caption{Patch of a reprojected CASPr tiling (left) and the corresponding
    patch of a regular hexagon tiling which is MLD to it (right).
    The two patches have exactly the same control points. Note that,
    like for the CASPr, the control points are a bit away from their
    tiles. \label{fig:hexrepro}}
\end{figure}

\begin{figure}
  \centerline{\includegraphics[width=0.38\textwidth]{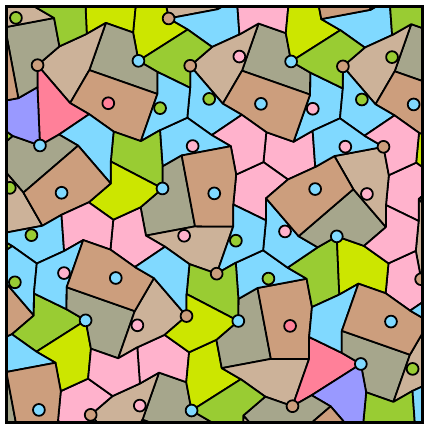}\hfil
              \includegraphics[width=0.38\textwidth]{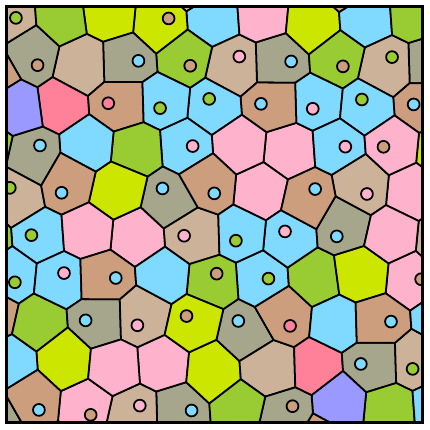}}
  \caption{Patch of a reprojected CASPr tiling (left) and the corresponding
    patch of a meta-tile tiling which is MLD to it (right).
    The two patches have exactly the same control points.
    \label{fig:metarepro}}
\end{figure}
 
Instead of regular hexagons, we can also use the meta-tiles of the
Hat-Turtle tiling (see \cite[Fig. 4.1]{Spectre}). These meta-tiles
also form a combinatorial hexagon tiling, whose tile edges take values
in another hexagon lattice, and we can index the CASPr vertices also
with respect to that hexagon lattice. This leads to yet another
reprojection map.  The reprojected CASPr tiling it produces is MLD to
a corresponding meta-tile tiling. Such a pair is shown in
Figure~\ref{fig:metarepro}. The reprojected CASPr tiles, shown on the
left, are now quite close to the true CASPr tiles.  Again, the two
patches are MLD to each other, with exactly the same control points.

\begin{coro}
  The Spectre tiling is MLD with a\/ $5$-color Meyer set that is a
  re-projection of the points from the cut-and-project description of
  the CASPr tiling from Theorem~$\ts\ref{thm:model}$.  In particular,
  the translation dynamical system of the Spectre tiling has
  pure-point spectrum with continuously representable eigenfunctions.
  All this holds, in fact, for \emph{all} Spectre-like tilings.  \qed
\end{coro}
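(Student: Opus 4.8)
The plan is to assemble pieces that are already in place: the cut-and-project description of CASPr from Theorem~\ref{thm:model}, the shape-change classification from Theorems~\ref{thm:symmetric} and~\ref{thm:asymmetric}, and the topological-conjugacy-versus-MLD dichotomy of \cite{JS}. First I would observe that the Spectre tiling arises from CASPr by varying the complex ratio $a:b$. By Theorem~\ref{thm:symmetric}, any such change is the composition of a rescaling, a rotation, and a topological conjugacy, so the Spectre is topologically conjugate to CASPr up to an overall similarity. Hence it suffices to show that every topological conjugate of CASPr is MLD to a reprojection of the CASPr control points, and then to read off the structural and dynamical consequences.

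The heart of the argument is a dimension count. Varying the projection $\RR^4 \to \RR^2$ while fixing the lattice $\cL$, the window, and the acceptance strip moves each control point by a weakly pattern-equivariant function of its internal coordinate; this yields a topological conjugacy that is generically \emph{not} an MLD equivalence, and these reprojections form a real $4$-dimensional family. On the other hand, Theorem~\ref{thm:asymmetric} identifies the space of shape changes with $\check{H}^1(\Omega,\CC)=\CC^4$, whose contracting subspace, the eigenvalue-$(4-\sqrt{15})$ eigenspace of the substitution, is a $\CC^2$, that is, exactly $4$ real dimensions; precisely these directions induce topological conjugacies rather than genuine linear transformations. Since the reprojection family and the conjugacy family both have real dimension $4$, and distinct reprojections are mutually non-MLD, the reprojections must exhaust the MLD classes of topological conjugates of CASPr. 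In particular, the Spectre tiling is MLD to a reprojection of the CASPr control points.

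It then remains to verify the conclusions of the statement. The reprojected control points inherit the cut-and-project description of Theorem~\ref{thm:model}: they are the image, under a fixed linear map, of the points of $\cL$ lying in the same $5$-colored regular window, and hence form a $5$-color regular model set, in particular a Meyer set. Being a regular model set, such a reprojection has pure-point dynamical spectrum with continuously representable eigenfunctions by the very argument of Theorem~\ref{thm:model}; MLD equivalence with the Spectre then transfers both properties to the Spectre itself. Finally, the statement for \emph{all} Spectre-like tilings follows because Theorem~\ref{thm:asymmetric} shows that every tiling space homeomorphic to the Spectre space is topologically conjugate, up to a linear transformation, to CASPr, so that applying the same reprojection argument to each such space gives the claim.

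The step I expect to be the main obstacle is making the dimension match rigorous rather than heuristic. One must confirm that the reprojections genuinely sweep out four independent real directions, equivalently that the assignment of a projection direction to its class in the contracting subspace of $\check{H}^1(\Omega,\CC)$ is injective with full-dimensional image, and that two distinct reprojections are never secretly MLD. Both points rest on the fact that the internal coordinate cannot be recovered from any bounded patch, which is exactly the content of the weak-pattern-equivariance remark but deserves to be stated carefully in terms of the $\star$-map of the CPS in~\eqref{eq:CPS}.
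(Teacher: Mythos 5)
Your proposal follows essentially the same route as the paper: its Section~6 likewise combines the (real) $4$-dimensional family of reprojections of the CASPr control points (topological conjugacies that are not MLD equivalences) with the fact, coming from the $\check{H}^1(\Omega,\CC)$ computation, that the topological conjugates of CASPr form a connected $4$-dimensional family up to MLD, concludes that reprojections exhaust all MLD classes of conjugates, and then specializes to the Spectre via the shape-change classification of Theorems~\ref{thm:symmetric} and~\ref{thm:asymmetric}. The rigor concern you flag about the dimension count is present at the same level of informality in the paper's own discussion, so your argument is neither weaker nor stronger than the published one.
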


It is worth noting what changes and what does not change under
reprojection. As an abstract dynamical system, the tiling space does
not change. This implies that the dynamical spectrum, namely the
action of translation on $L^2(\Omega)$, does not change. This in turn
implies that the diffraction pattern generated by the control points
remains pure point, with the exact same Bragg peak locations.

However, the intensity of the Bragg peaks can vary. As we continuously
change CASPr into the Spectre tiling, adjusting the positions of the
control points, some Bragg peaks become more intense while others
fade. To the naked eye, the two diffraction patterns show rather
different features \cite{BGMM}, even though they are supported on the
same points. Various further details, including the deformation maps
and the diffraction images, will be given there.

\section*{Acknowledgements}

It is our pleasure to thank Markus Kirschmer for helpful discussions
on the number-theoretic structure.  This work was supported by the
German Research Council (Deutsche Forschungsgemeinschaft, DFG) under
SFB-TRR 358/1 (2023) -- 491392403 and by the National Science
Foundation under grant DMS-2113468.  \bigskip

\end{document}